\newtheorem{theorem}{Theorem}[section]
\newtheorem{lemma}{Lemma}[section]
\newtheorem{remark}{Remark}[section]
\newtheorem{algorithm}{Algorithm}[section]
\newtheorem{example}{Example}[section]
\numberwithin{equation}{section}
\theoremstyle{definition}
\date{}
\begin{document}

\title{\textbf{Convergence analysis of the Halpern iteration with adaptive anchoring parameters}}
\author{
{\sc Songnian He$^{1,}${\thanks{email: songnianhe@163.com}}},\quad
{\sc Hong-Kun Xu$^{2,3,}${\thanks{Corresponding author. email: xuhk@hdu.edu.cn }}},\quad
{\sc Qiao-Li Dong$^{1,}${\thanks{email: dongql@lsec.cc.ac.cn}}},\quad
{\sc Na Mei$^{1,}${\thanks{email: meina18812553229@163.com}}}\\
{\footnotesize $^1$College of Science, Civil Aviation University of China, Tianjin 300300, China}\\
{\footnotesize $^2$School of Science, Hangzhou Dianzi University, Hangzhou, 310018, China }\\
{\footnotesize $^3$College of Mathematics and Information Science, Henan Normal University, Xinxiang, 453007, China}\\
}
\maketitle

\begin{abstract}
We propose an adaptive way to choose the anchoring parameters for the Halpern iteration to find a fixed point of a nonexpansive mapping in a real Hilbert space. We prove strong convergence of this adaptive Halpern iteration and obtain the rate of asymptotic regularity at least $O(1/k)$, where $k$ is the number of iterations.
Numerical experiments are also provided to show advantages and outperformance
of our adaptive Halpern algorithm over the standard Halpern algorithm.

\end{abstract}

\noindent{\bf Keywords:} Halpern iteration, fixed point, adaptive anchoring parameter, rate of asymptotic regularity.

\noindent{\bf 2020 AMS Subject Classification:} Primary 47J26, 47J25; secondary 47H09, 65J15.

\section{Introduction}

Let $\mathcal{H}$ be a real Hilbert space  with  inner product $\langle\cdot,\cdot\rangle$ and norm $\|\cdot\|$, respectively.
Recall that a mapping $T: \mathcal{H}\rightarrow \mathcal{H}$ is said to be nonexpansive if, for each $x,y\in \mathcal{H}$,
$$\|Tx-Ty\|\leq \|x-y\|.$$
We use $Fix(T)$ to denote the set of fixed points of $T$, that is, $Fix(T)=\{x\in \mathcal{H}\, |\, x=Tx\}$.
It is known that $Fix(T)$ is always convex and that $Fix(T)\not=\emptyset$ if and only if,
for each $x\in \mathcal{H}$, the sequence of trajectories of $T$ at $x$, $\{T^nx\}_{n=0}^\infty$, is bounded.

It is always an interesting topic to find a fixed point of a nonexpansive mapping.
This is nontrivial since unlike the case of contractions,
in the case of a nonexpansive mapping $T$, the sequence of the Picard iterates of $T$
at a point $x$, $\{T^nx\}_{n=0}^\infty$, may fail to converge (for instance, a rotation around the origin
in the plane $\mathbb{R}^2$).

The Halpern iteration  \cite{Halpern1967} was proposed by Halpern in 1967 in  a Hilbert space. This method generates,
 with an initial guess $x^0\in \mathcal{H}$ arbitrarily chosen,  a sequence $\{x^k\}_{k=0}^\infty$ by the iteration process:
 \begin{equation}\label{Halpern-1}
 x^k=\lambda_k u+(1-\lambda_k)Tx^{k-1},\qquad k=1,2,\cdots,
 \end{equation}
where $u$ is a fixed point in $\mathcal{H}$, referred to as anchor, and the parameters $\{\lambda_k\}_{k=1}^\infty$ are in $(0,1)$,
which will be referred to as anchoring parameters.

Halpern \cite{Halpern1967} discovered that in order that \eqref{Halpern-1} converges for every nonexpansive mapping $T$ with $Fix(T)\not=\emptyset$
and arbitrary anchor $u\in\mathcal{H}$, the following two conditions are necessary (but not sufficient):
\begin{itemize}
\item[(C1)] $\lim_{k\to\infty}\lambda_k=0$,
\item[(C2)] $\sum_{k=1}^\infty\lambda_k=\infty$.
\end{itemize}

To guarantee convergence of Halpern iteration \eqref{Halpern-1}, an additional condition must be satisfied.
Any one of the following conditions is such an additional sufficient condition:
\begin{itemize}
\item[(C3)] (Halpern \cite{Halpern1967}) $(\lambda_k)$ is acceptable: there exists $(k(i))$ such that (i) $k(i+1)\ge k(i)$,
(ii)  $\lim_{i\to\infty}\frac{\lambda_{i+k(i)}}{\lambda_i}=1$, (iii) $\lim_{i\to\infty} k(i)\lambda_i=\infty$;
\item[(C4)] (Lions \cite{Lio77})
$\lim_{k\to\infty}\frac{|\lambda_{k+1}-\lambda_k|}{\lambda_k^2}=0$ (e.g., $\lambda_k=\frac{1}{(k+1)^\alpha}$, $0<\alpha<1$);
\item[(C5)] (Wittmann \cite{Wittmann1992})
$\sum_{n=1}^\infty |\lambda_{k+1}-\lambda_k|<\infty$ (e.g., $\lambda_k=\frac{1}{(k+1)^\alpha}$, $0<\alpha\le 1$);
\item[(C6)] (Reich \cite{Rei94})  $(\lambda_k)$ is decreasing;
\item[(C7)]  (Xu \cite{Xu2002})
$\lim_{k\to\infty}\frac{|\lambda_{k+1}-\lambda_k|}{\lambda_k}=0$,  i.e., $\frac{\lambda_{k+1}}{\lambda_k}\to 1$
(e.g., $\lambda_k=\frac{1}{(k+1)^\alpha}$, $0<\alpha\le 1$).
\end{itemize}

An advantage of Halpern's iteration  \eqref{Halpern-1} over some other iterations (such as
the Krasnosel'ski--Mann iteration) is that it is always strongly convergent even in an infinite-dimensional Hilbert space
and moreover, the limit is identified as the metric projection of the anchor $u$ onto the fixed point set $Fix(T)$.

Some quantitative properties on the displacements  $\|x^k-Tx^k\|$
of the Halpern iteration \eqref{Halpern-1} in both Hilbert and Banach spaces have been studied by several researchers,
see \cite{LL2007,KU2011,KU2012}.
 He, et al. \cite{He2019} studied optimal parameters of the Halpern iteration and gave an adaptive selection method of the approximate
 optimal parameters.
For more detail, the reader is referred to the survey article \cite{Lop10}.

Two more major progresses have been achieved recently on the Halpern iteration \eqref{Halpern-1}. The first one is
successful applications in machine learning (generative adversarial networks (GANs), in particular) \cite{Dia20,Yoo21}
and other applied areas such as minimax problems (see, e.g., \cite{QiX21}).
The second one is the (tight) optimal rate of asymptotic regularity proved by Lieder \cite{Lieder2021} (see also \cite{Sab17})
in a Hilbert space:
\begin{equation}\label{Lieder}
\|x^k-Tx^{k}\|\leq \frac{2}{k+1}\|x^0-x^*\|,\quad k\geq 1,
\end{equation}
where $x^*$ is an arbitrary fixed point of $T$, and where one assumes $u=x^0$ and $\lambda_k=\frac{1}{k+1}$ for all $k\geq 1$.


It has been brought to our attention that the anchoring parameters in the conditions (C1)-(C7) and also in Lieder's
asymptotic regularity rate \eqref{Lieder} are chosen in an open loop way.

In general, the purpose of adaptively selecting the parameters of an algorithm is to speed up the convergence of the algorithm,
  though updating the parameters may require certain additional computing work slightly.
An adaptive parameter selection strategy is believed to be effective if it significantly improves the convergence speed of the algorithm with little extra computational effort.
  It is therefore wondered if the rate of asymptotic regularity
of Halpern's iteration  \eqref{Halpern-1} can be improved should the anchoring parameters are chosen in an adaptive way,
This is the main problem to be dealt in this paper.
 More precisely, suppose $x^{k-1}(k\geq 1)$ has already been obtained,
 we will introduce an adaptive way to choose the
anchoring parameters as $\lambda_k:=\frac{1}{\varphi_k+1}\,(k\geq 1)$,
where $\varphi_k$ is determined by an adaptive selection method (see \eqref{eq3.1} in Section \ref{Sec:3}), which is very different from
works mentioned above.  A motivation to choose $\varphi_k$ in such a way as given in \eqref{eq3.1} is to shrink the gap
between $\|Tx^{k-1}-Tx^k\|^2$ and $\|x^{k-1}-x^k\|^2$ since we have $\|Tx^{k-1}-Tx^k\|^2\leq\|x^{k-1}-x^k\|^2$ by nonexpansiveness of $T$.
It then follows from the definition of $x^k$ that (see the details of the derivation of \eqref{eq3.9})
\begin{equation}\label{Halpern-1.3}
\aligned
0&\le \|x^{k-1}-x^k\|^2-\|Tx^{k-1}-Tx^k\|^2\\
&=\frac{2}{\varphi_k}\langle x^k-Tx^k, x^0-x^k \rangle-\|x^k-Tx^k\|^2\\
&\quad +\|x^{k-1}-Tx^{k-1}\|^2-\frac{2}{\varphi_k+1}\langle x^{k-1}-Tx^{k-1}, x^0-Tx^{k-1}\rangle.
\endaligned
\end{equation}
Our strategy is to choose the parameter $\varphi_k$ such that (see also \eqref{eq3.10})
\begin{equation}\label{Halpern-1.4}
\|x^{k-1}-Tx^{k-1}\|^2=\frac{2}{\varphi_k+1}\langle x^{k-1}-Tx^{k-1}, x^0-Tx^{k-1} \rangle.
\end{equation}
[This then yields \eqref{eq3.1} in Section \ref{Sec:3}.]
Thus, from \eqref{Halpern-1.3} and \eqref{Halpern-1.4}, we get a basic estimate:
\begin{equation}\label{Halpern-1.5}
\|x^k-Tx^k\|^2\leq \frac{2}{\varphi_k}\langle x^k-Tx^k, x^0-x^k \rangle.
\end{equation}

Basing upon \eqref{Halpern-1.5}, we will prove the strong convergence of Halpern iteration \eqref{Halpern-1} under adaptively chosen
anchoring parameters and also discuss the rate of asymptotic regularity. An example shows that our rate of asymptotic regularity is better than \eqref{Lieder}. By \eqref{Halpern-1.4} (or \eqref{eq3.1}), we also find that the amount of work required to calculate the parameters $\varphi_k$ is little.

The organization of the paper is as follows. In the next section we will include some basic tools for proving weak and strong
convergence in a Hilbert space. In Section \ref{Sec:3} we prove the main convergence results on
the Halpern iteration  in which the anchoring parameters are selected in an adaptive way as briefly described above.
The main results include strong convergence of the method and the following rate of asymptotic regularity:
\begin{equation}\label{He-Xu-New}
\|x^k-Tx^{k}\|\leq \frac{2}{\varphi_k+1}\|x^0-x^*\|,\quad k\geq 1,
\end{equation}
where $x^*$ is an arbitrary fixed point of $T$.
Since we will prove that $\varphi_k\geq k$ for all $k\geq 1$, (\ref{He-Xu-New}) is, in general, an improvement of
Lieder's rate (\ref{Lieder}).  We will also demonstrate an example to illustrate that our rate \eqref{He-Xu-New}
is indeed better than (\ref{Lieder}) in certain circumstances.

In Section \ref{Sec:4} we briefly discuss the case where the adaptive anchoring parameters are summable.
In this case we find that the Halpern iterates converge, but not to the projection of the anchor $u$ onto the
fixed point set $Fix(T)$ of $T$; it is instead the projection of the anchor $u$ onto another closed convex subset.

Numerical experiments will be carried out in Section \ref{Sec:5} to show efficiency of our adaptive Halpern iteration.
Computing results show that our adaptive Halpern iteration outperforms the ordinary Halpern iteration (i.e., Halpern iteration with
anchoring parameters chosen by the usual open loop manner).

\section{Preliminaries}
Some  tools are listed in this section, which will be used in the proofs of our main results. The following notation will be used
throughout the rest of the paper.
\begin{itemize}
  \item [(i)]  $x^k\rightarrow x$  denotes the strong convergence of $(x^k)$ to $x$.
  \item [(ii)] $x^k \rightharpoonup  $ denotes the weak convergence of $(x^k)$ to $x$.
  \item [(iii)] $\omega_w(x^k) :=\{x\mid\exists \,\, {\rm subsequence}\,\,   \{x^{k_i}\}_{i=1}^\infty\subset\{x^k\}_{k=1}^\infty$
  such that $ x^{k_i} \rightharpoonup x\}$ denotes the $\omega$-weak limit point set of $ \{x^k\}_{k=1}^\infty$.
 \end{itemize}

The metric (nearest point) projection $ P_{C}$ from a real Hilbert space $\mathcal{H}$
onto a nonempty closed convex subset $C\subset \mathcal{H}$  is defined by
$$ P_{C}(x)=\arg\min \{\|x-y\|\,|\,~y\in C \},~x\in \mathcal{H}.$$
It is well-known that $P_{C}$ is nonexpansive and the following characteristic inequality holds.
\begin{lemma}{\rm\cite[Section 3]{GR1984}}
\label{lem22}
 Let $z\in\mathcal{H}$ and $u\in C$. Then
 $u=P_{C}z$ if and only if$$\langle z-u,v-u\rangle\leq 0,~\quad v\in C .$$
 \end{lemma}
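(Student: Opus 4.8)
The plan is to establish the two implications of the equivalence separately, in each case reducing matters to the expansion of a squared norm in $\mathcal{H}$ together with the convexity of $C$.

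For the ``only if'' direction I would fix an arbitrary $v\in C$ and use that, by convexity, the point $u_t:=u+t(v-u)$ belongs to $C$ for every $t\in[0,1]$. Since $u=P_C z$ minimizes $y\mapsto\|z-y\|$ over $C$, the inequality $\|z-u\|^2\le\|z-u_t\|^2$ holds; expanding the right-hand side gives $0\le -2t\langle z-u,v-u\rangle+t^2\|v-u\|^2$, hence $2\langle z-u,v-u\rangle\le t\|v-u\|^2$ for all $t\in(0,1]$. Letting $t\downarrow 0$ then yields $\langle z-u,v-u\rangle\le 0$, which is the claimed inequality.

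For the ``if'' direction I would assume $\langle z-u,v-u\rangle\le 0$ for all $v\in C$, fix $v\in C$, and expand
\[
\|z-v\|^2=\|(z-u)+(u-v)\|^2=\|z-u\|^2-2\langle z-u,v-u\rangle+\|u-v\|^2\ge\|z-u\|^2,
\]
where the last inequality uses $-2\langle z-u,v-u\rangle\ge 0$ by hypothesis together with $\|u-v\|^2\ge 0$. This shows $u$ realizes the minimum distance from $z$ to $C$, and since this minimizer is unique (the standard well-posedness of the metric projection onto a nonempty closed convex subset of a Hilbert space), we conclude $u=P_C z$.

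No part of this is genuinely hard; the only point demanding a little care is the limiting argument $t\downarrow 0$ in the first direction, where one must keep the factor $t$ on the right before passing to the limit rather than dividing through prematurely. Alternatively, one could sidestep the limit altogether by observing that the quadratic $t\mapsto -2t\langle z-u,v-u\rangle+t^2\|v-u\|^2$ is nonnegative on $[0,1]$ and examining its behaviour near $t=0$.
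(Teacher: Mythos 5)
Your proof is correct: both directions are the standard argument (convex perturbation $u+t(v-u)$ with $t\downarrow 0$ for necessity, and the norm expansion $\|z-v\|^2\ge\|z-u\|^2$ plus uniqueness of the minimizer for sufficiency). The paper itself gives no proof of this lemma, citing \cite[Section 3]{GR1984}, and your argument is essentially the one found there, so there is nothing to add.
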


\begin{lemma} {\rm (The demiclosedness principle for nonexpansive mappings \cite{GK1990}.)\label{lemmaGK}}
Let $C$ be a nonempty closed convex subset of a real Hilbert space $\mathcal{H}$ and let $T: C\rightarrow C$ be a nonexpansive mapping such that $Fix(T)\neq\emptyset$. If a sequence $\{x^k\}_{k=0}^\infty$ in $C$ is such that $x^k\rightharpoonup z$ and $\|x^k-Tx^k\|\rightarrow 0$, then $z=Tz$.
\end{lemma}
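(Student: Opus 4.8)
The plan is to use the classical argument resting on Opial's property of Hilbert spaces, arguing by contradiction. First I would record two preliminary observations: since $C$ is convex and closed it is weakly closed, so the weak limit $z$ lies in $C$ and $Tz$ is well defined; and since a weakly convergent sequence is bounded, each $\liminf_{k\to\infty}\|x^k-y\|$ appearing below is finite. Now assume, for contradiction, that $z\neq Tz$.

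The first step is the Opial identity. For any $y\in\mathcal{H}$ one expands
\begin{equation*}
\|x^k-y\|^2=\|x^k-z\|^2+2\langle x^k-z,\,z-y\rangle+\|z-y\|^2 ,
\end{equation*}
and since $x^k\rightharpoonup z$ the cross term tends to $0$, so $\liminf_{k\to\infty}\|x^k-y\|^2=\liminf_{k\to\infty}\|x^k-z\|^2+\|z-y\|^2$. Taking $y=Tz$ and using $z\neq Tz$ gives the strict inequality
\begin{equation*}
\liminf_{k\to\infty}\|x^k-Tz\|^2>\liminf_{k\to\infty}\|x^k-z\|^2 .
\end{equation*}

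The second step is to bound $\|x^k-Tz\|$ from above, using nonexpansiveness of $T$ together with $\|x^k-Tx^k\|\to 0$:
\begin{equation*}
\|x^k-Tz\|\le\|x^k-Tx^k\|+\|Tx^k-Tz\|\le\|x^k-Tx^k\|+\|x^k-z\| ,
\end{equation*}
whence $\liminf_{k\to\infty}\|x^k-Tz\|\le\liminf_{k\to\infty}\|x^k-z\|$, and therefore also $\liminf_{k\to\infty}\|x^k-Tz\|^2\le\liminf_{k\to\infty}\|x^k-z\|^2$. This contradicts the previous display, so $z=Tz$.

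I do not expect a genuine obstacle: the statement is classical (it is exactly the cited Goebel--Kirk result). The only point needing a little care is the passage to $\liminf$ in the Opial identity, which uses boundedness of $(x^k)$ and the fact that weak convergence annihilates the inner-product term; once that is in place, the upper and lower estimates for $\liminf_{k\to\infty}\|x^k-Tz\|$ collide immediately. An equally short alternative would use that $I-T$ is monotone and Lipschitz, hence maximal monotone, together with the weak-strong closedness of the graph of a maximal monotone operator applied to the pairs $(x^k,(I-T)x^k)$; I would nonetheless prefer the Opial version, as it stays within the elementary tools already listed. (The hypothesis $Fix(T)\neq\emptyset$ is not actually used in the argument; it is retained only because the lemma is quoted in that form.)
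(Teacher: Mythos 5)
Your argument is correct. The paper itself offers no proof of this lemma --- it is simply quoted from Goebel and Kirk \cite{GK1990} --- and your Opial-property contradiction argument is exactly the standard Hilbert-space proof of that cited result: the identity $\|x^k-y\|^2=\|x^k-z\|^2+2\langle x^k-z,z-y\rangle+\|z-y\|^2$ with the cross term vanishing under weak convergence gives the strict lower bound at $y=Tz$, while nonexpansiveness and $\|x^k-Tx^k\|\to 0$ give the matching upper bound, and the collision of the two $\liminf$ estimates forces $z=Tz$. You also correctly handle the only delicate points: $z\in C$ (so $Tz$ is defined) because a closed convex set is weakly closed, boundedness of $(x^k)$ makes the $\liminf$'s finite, and the hypothesis $Fix(T)\neq\emptyset$ is indeed not needed for the conclusion.
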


\begin{lemma} {\rm(\cite{LX2007})}
\label{lem-1}
 Let $D$ be a nonempty subset of $\mathcal{H}$. Let $\{u^{k}\}_{k=0}^\infty \subset \mathcal{H}$ satisfy the properties:
 \begin{itemize}
\item[{\rm(i)}] $\lim_{k\rightarrow\infty}\|u^k-u\|$ exists for each $u\in D$;
\item[{\rm(ii)}] $\omega_w(u^k)\subset D$.
\end{itemize}
Then $\{u^{k}\}_{k=0}^\infty$ converges weakly to a point in $D$.
\end{lemma}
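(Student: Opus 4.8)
The plan is to show that the two hypotheses force the weak $\omega$-limit set $\omega_w(u^k)$ to consist of a single point, necessarily lying in $D$ by (ii), and then to invoke the standard principle that a bounded sequence in a Hilbert space possessing a unique weak cluster point is weakly convergent to that point.

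First I would record that $\{u^k\}$ is bounded. Since $D\neq\emptyset$, fix any $u\in D$; by (i) the real sequence $(\|u^k-u\|)$ converges and hence is bounded, so $\{u^k\}$ is bounded. By weak sequential compactness of bounded subsets of $\mathcal{H}$, this already gives $\omega_w(u^k)\neq\emptyset$.

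Next comes the heart of the argument: showing $\omega_w(u^k)$ is a singleton. Suppose $p,q\in\omega_w(u^k)$. By (ii) both $p$ and $q$ lie in $D$, so by (i) the limits $\lim_k\|u^k-p\|$ and $\lim_k\|u^k-q\|$ exist, and therefore $\ell:=\lim_k\bigl(\|u^k-p\|^2-\|u^k-q\|^2\bigr)$ exists as well. Expanding the squared norms,
\[
\|u^k-p\|^2-\|u^k-q\|^2=2\langle u^k,\,q-p\rangle+\|p\|^2-\|q\|^2 .
\]
Passing to the limit along a subsequence $u^{k_i}\rightharpoonup p$ gives $\ell=2\langle p,\,q-p\rangle+\|p\|^2-\|q\|^2$, while passing to the limit along a subsequence $u^{k_j}\rightharpoonup q$ gives $\ell=2\langle q,\,q-p\rangle+\|p\|^2-\|q\|^2$. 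Subtracting these two identities yields $0=2\langle p-q,\,q-p\rangle=-2\|p-q\|^2$, hence $p=q$. Thus $\omega_w(u^k)=\{p\}$ for a single point $p$, and $p\in D$ by (ii).

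Finally I would deduce $u^k\rightharpoonup p$. If this failed, there would be a weak neighbourhood $V$ of $p$ and a subsequence $\{u^{k_m}\}$ with $u^{k_m}\notin V$ for every $m$; being bounded, it would admit a further subsequence converging weakly to some point of $\omega_w(u^k)$, which must be $p\in V$, a contradiction. Hence $u^k\rightharpoonup p\in D$. The only delicate point is the uniqueness of the weak cluster point; the norm-expansion identity above is the standard Opial-type device that makes it go through, and the remaining steps are routine.
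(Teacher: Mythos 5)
Your proof is correct: boundedness from (i), uniqueness of the weak cluster point via the norm-expansion identity, and the subsequence argument together give exactly the classical Opial-type proof of this lemma. The paper itself gives no proof (it simply cites \cite{LX2007}), and your argument is essentially the standard one found there, so there is nothing to add.
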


\begin{lemma} {\rm(\cite{Polyak1987})}
\label{lem-2}
Assume that $\{a_{k}\}_{k=0}^\infty$, $\{\lambda_{k}\}_{k=0}^\infty$ and $\{\mu_{k}\}_{k=0}^\infty$ are  sequences of nonnegative real numbers such that
$$ a_{k+1}\leq(1+\lambda_{k})a_{k}+\mu_{k},\quad k \geq 0.$$
If, in addition, $\sum_{k=0}^\infty \lambda_k<+\infty$ and $\sum_{k=0}^\infty \mu_k<+\infty$, then $\lim_{k\rightarrow \infty} a_{k}$ exists
\end{lemma}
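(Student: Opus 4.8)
The plan is to neutralize the multiplicative perturbations $(1+\lambda_k)$ by a rescaling, after which the assertion reduces to the elementary fact that a nonnegative sequence whose increments are summable must converge to a finite limit.

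First I would introduce the partial products $P_0:=1$ and $P_k:=\prod_{j=0}^{k-1}(1+\lambda_j)$ for $k\ge 1$. Since $\log(1+\lambda_j)\le\lambda_j$ and $\sum_{j}\lambda_j<+\infty$, the sequence $\{P_k\}$ is nondecreasing and bounded above, hence converges to some $P_\infty\in[1,+\infty)$; in particular $P_k\ge 1$ for all $k$. Dividing the hypothesis $a_{k+1}\le(1+\lambda_k)a_k+\mu_k$ by $P_{k+1}=(1+\lambda_k)P_k$ and setting $c_k:=a_k/P_k\ge 0$, $\nu_k:=\mu_k/P_{k+1}$, one obtains $c_{k+1}\le c_k+\nu_k$ for every $k\ge 0$; moreover $\nu_k\le\mu_k$ because $P_{k+1}\ge 1$, so $\sum_k\nu_k\le\sum_k\mu_k<+\infty$.

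Next I would show that $\lim_k c_k$ exists. Put $d_k:=c_k+\sum_{j\ge k}\nu_j$, which is finite by summability. From $c_{k+1}\le c_k+\nu_k$ it follows that $d_{k+1}\le d_k$, so $\{d_k\}$ is nonincreasing; since $d_k\ge c_k\ge 0$, it is bounded below and therefore converges. Because $\sum_{j\ge k}\nu_j\to 0$ as $k\to\infty$, the sequence $\{c_k\}$ converges as well. Finally $a_k=P_k c_k$ is a product of two convergent sequences with finite limits, so $\lim_k a_k$ exists, which is the claim.

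I do not anticipate any genuine obstacle here: the only point that needs a moment's care is verifying that the rescaled error terms $\nu_k$ remain summable, and this is immediate from $P_k\ge 1$. If one prefers to avoid infinite products altogether, an equivalent route is to first derive the a priori bound $a_k\le\bigl(a_0+\sum_{j}\mu_j\bigr)\prod_{j}(1+\lambda_j)=:M<+\infty$ by a routine induction, then rewrite the hypothesis as $a_{k+1}\le a_k+(M\lambda_k+\mu_k)$ with summable increment $M\lambda_k+\mu_k$, and apply the same ``monotone plus summable tail'' argument directly to $\{a_k\}$.
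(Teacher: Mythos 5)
Your proof is correct: the rescaling by the partial products $P_k=\prod_{j<k}(1+\lambda_j)$ (finite since $\sum_j\lambda_j<\infty$) reduces the recursion to $c_{k+1}\le c_k+\nu_k$ with $\sum_k\nu_k<\infty$, and the monotone-plus-summable-tail argument via $d_k=c_k+\sum_{j\ge k}\nu_j$ is sound, as is your alternative via the a priori bound $M$. The paper itself gives no proof of this lemma (it is quoted from Polyak's book), and your argument is the standard classical one for this quasi-monotonicity result, so there is nothing to reconcile with the paper's treatment.
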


\begin{lemma}{\rm(\cite{Xu2002})}
\label{lem25}
Assume that $\{a_{k}\}_{k=0}^\infty$ is a sequence of nonnegative real numbers such that
$$ a_{k+1}\leq(1-\gamma_{k})a_{k}+\gamma_{k}\delta_{k},~k \geq 0,$$
where $\{\gamma_{k}\}_{k=0}^\infty$ is a sequence in (0,1) and $\{\delta_{k}\}_{k=0}^\infty$ is a real sequence such that
\begin{itemize}
\item[{\rm(i)}]$\sum_{k=0}^{\infty} \gamma_k=\infty$;
\item[{\rm(ii)}]$ \limsup_{k\rightarrow \infty}\delta_{k}\leq 0~ or~ \sum_{k=0}^{\infty} |\gamma_k\delta_{k}|<\infty.$
\end{itemize}
Then $\lim_{k\rightarrow \infty} a_{k}=0.$
\end{lemma}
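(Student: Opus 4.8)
The plan is to treat separately the two alternatives allowed in hypothesis~(ii), since they call for different ingredients.

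\emph{First case: $\limsup_{k\to\infty}\delta_k\le 0$.} I would fix $\varepsilon>0$ and pick $N$ with $\delta_k\le\varepsilon$ for all $k\ge N$. Passing to $b_k:=a_k-\varepsilon$, the recursion becomes $b_{k+1}\le(1-\gamma_k)b_k$ for $k\ge N$, and since each factor $1-\gamma_k$ is strictly positive (because $\gamma_k\in(0,1)$) one may iterate safely to obtain $b_{k+1}\le\big(\prod_{j=N}^{k}(1-\gamma_j)\big)b_N$, with no restriction on the sign of $b_N$. Using $1-x\le e^{-x}$ together with $\sum_j\gamma_j=\infty$ gives $\prod_{j=N}^{k}(1-\gamma_j)\to0$, hence $\limsup_k b_k\le 0$, that is, $\limsup_k a_k\le\varepsilon$. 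Letting $\varepsilon\downarrow0$ and recalling $a_k\ge0$ forces $a_k\to0$.

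\emph{Second case: $\sum_{k}|\gamma_k\delta_k|<\infty$.} Here I would first note that $\gamma_k\in(0,1)$ yields $a_{k+1}\le a_k+|\gamma_k\delta_k|$, so Lemma~\ref{lem-2} (with the $\lambda_k$ there taken to be $0$ and the $\mu_k$ there taken to be $|\gamma_k\delta_k|$) shows that $a:=\lim_k a_k$ exists. To identify $a$ as $0$, I would rewrite the recursion as $\gamma_k a_k\le a_k-a_{k+1}+|\gamma_k\delta_k|$ and sum over $k=0,\dots,n$; the right-hand side telescopes and stays bounded by $a_0+\sum_k|\gamma_k\delta_k|<\infty$, so $\sum_k\gamma_k a_k<\infty$. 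Were $a>0$, one would have $\gamma_k a_k\ge\tfrac{a}{2}\gamma_k$ for all large $k$, and $\sum_k\gamma_k=\infty$ would make $\sum_k\gamma_k a_k$ diverge --- a contradiction. Hence $a=0$.

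The only point requiring care is the iteration step in the first case: one must exploit that the factors $1-\gamma_k$ are strictly positive so that repeated multiplication preserves the inequality even though the sign of $b_N=a_N-\varepsilon$ is not controlled, and one must then invoke the standard fact that $\prod_j(1-\gamma_j)=0$ whenever $\gamma_j\ge0$ and $\sum_j\gamma_j$ diverges. Everything else reduces to telescoping and a single appeal to Lemma~\ref{lem-2}.
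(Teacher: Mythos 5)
Your proof is correct: both cases are handled soundly (the strict positivity of the factors $1-\gamma_j$ does justify iterating $b_{k+1}\le(1-\gamma_k)b_k$ even though the sign of $b_N=a_N-\varepsilon$ is unknown, and the telescoping in the second case correctly yields $\sum_k\gamma_k a_k<\infty$, which together with $\sum_k\gamma_k=\infty$ forces the existing limit to be $0$). The paper states this lemma without proof, citing \cite{Xu2002}, and your argument is essentially the standard one from that reference, so the approach coincides with the source's.
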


\section{Halpern Iteration with Adaptive Anchoring Parameters}
\label{Sec:3}

Let $\mathcal{H}$ be a real Hilbert space and let $T: \mathcal{H}\rightarrow \mathcal{H}$ be a nonexpansive mapping with the nonempty fixed point set $Fix(T)$. Consider the Halpern iteration \eqref{Halpern-1}. Here in this section we will introduce a new adaptive way
 to choose the anchoring parameters $\{\lambda_k\}$. Our Halpern iteration (with the anchor and initial guess identical) reads as follows.

\vskip 2mm

\begin{algorithm}(Halpern iteration with adaptive anchoring parameters)\label{Al:3.1}
\rm
\quad

\hspace*{0.1 pc} Step 1: Choose $x^0\in \mathcal{H}$  arbitrarily and set $k:=1$.\\
\hspace*{1.5 pc} Step 2: For the current $x^{k-1}$  $(k\geq 1)$, if $x^{k-1}=Tx^{k-1},$ the iteration process is\\
\hspace*{4.7 pc} terminated. Otherwise (i.e., $x^{k-1}\not=Tx^{k-1}$), calculate
\begin{equation}\label{eq3.2}
x^k=\frac{1}{\varphi_k+1}x^0+\frac{\varphi_k}{\varphi_k+1}Tx^{k-1},
\end{equation}
\hspace*{4.75 pc} where $\{\varphi_k\}$ is given by
  \begin{equation}\label{eq3.1}
 \varphi_k:=\frac{2\langle x^{k-1}-Tx^{k-1}, x^0-x^{k-1}\rangle}{\|x^{k-1}-Tx^{k-1}\|^2}+1.
 \end{equation}
\hspace*{1.5 pc} Step 3:  Set $k:=k+1$ and return to Step 2.
\end{algorithm}

\begin{remark}
\rm
With no loss of generality, we always assume that $x^k\neq Tx^k$ for all $k\geq 0$ in the rest of this section.
Namely, Algorithm \ref{Al:3.1} generates an infinite sequence of iterates $\{x^k\}_{k=0}^\infty$.
\end{remark}

We now discuss the convergence and rate of asymptotic regularity of Algorithm \ref{Al:3.1}.
We begin with establishing some properties of the sequence $\{\varphi_k\}$ coupled with the iterates $\{x^k\}$.

\begin{lemma}\label{lem3.1}
The following properties hold for all $k\geq 1$:
\begin{itemize}
\item[{\rm(i)}] $\varphi_k\geq k$,
\item[{\rm(ii)}] $\|x^k-Tx^k\|^2\leq\frac{2}{\varphi_k}\langle x^k-Tx^k,x^0-x^k\rangle.$
\end{itemize}
\end{lemma}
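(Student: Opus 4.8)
The statement has two parts; I would prove them essentially together, since (ii) is meant to be the "basic estimate" \eqref{Halpern-1.5} and (i) is what one reads off from it. The natural strategy is induction on $k$, with the key algebraic identity being \eqref{Halpern-1.3}, which the introduction already derived. So first I would spell out that derivation carefully: starting from $x^k = \frac{1}{\varphi_k+1}x^0 + \frac{\varphi_k}{\varphi_k+1}Tx^{k-1}$, compute $x^{k-1}-x^k$ and $Tx^{k-1}-Tx^k$ explicitly, expand $\|x^{k-1}-x^k\|^2 - \|Tx^{k-1}-Tx^k\|^2$, and regroup to obtain exactly the three-line expression in \eqref{Halpern-1.3}. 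The left side is $\ge 0$ by nonexpansiveness of $T$. The definition \eqref{eq3.1} of $\varphi_k$ is precisely engineered so that the last two terms on the right of \eqref{Halpern-1.3} cancel — that is, \eqref{Halpern-1.4} holds — which collapses \eqref{Halpern-1.3} to
\begin{equation*}
0 \le \frac{2}{\varphi_k}\langle x^k - Tx^k, x^0 - x^k\rangle - \|x^k - Tx^k\|^2,
\end{equation*}
and this is exactly (ii). So (ii) is really just unwinding the construction; I would present \eqref{eq3.1}$\Leftrightarrow$\eqref{Halpern-1.4} as a one-line verification and then substitute.

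**Proving (i).** For $k=1$ we need $\varphi_1 \ge 1$, i.e. $\langle x^0 - Tx^0, x^0 - x^0\rangle = 0$, so $\varphi_1 = 1$ directly from \eqref{eq3.1}. For the inductive step, suppose $\varphi_k \ge k$; I want $\varphi_{k+1}\ge k+1$. By \eqref{eq3.1} (shifted),
\begin{equation*}
\varphi_{k+1} = \frac{2\langle x^k - Tx^k,\, x^0 - x^k\rangle}{\|x^k - Tx^k\|^2} + 1,
\end{equation*}
so $\varphi_{k+1}\ge k+1$ is equivalent to $2\langle x^k - Tx^k, x^0 - x^k\rangle \ge k\,\|x^k - Tx^k\|^2$. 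But part (ii), already established for index $k$, gives $\|x^k - Tx^k\|^2 \le \frac{2}{\varphi_k}\langle x^k - Tx^k, x^0 - x^k\rangle$, hence $2\langle x^k - Tx^k, x^0 - x^k\rangle \ge \varphi_k\|x^k-Tx^k\|^2 \ge k\|x^k-Tx^k\|^2$ by the inductive hypothesis. That closes the induction. (Implicitly this also shows the inner product $\langle x^k-Tx^k, x^0-x^k\rangle$ is nonnegative, so the $\varphi_k$ are well-defined and $\ge 1$; one should note $x^k\ne Tx^k$ is assumed in the Remark, so the denominators never vanish.)

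**Main obstacle.** There is no deep analytic difficulty here — everything is finite-dimensional linear algebra plus nonexpansiveness — so the real work is bookkeeping: getting the expansion \eqref{Halpern-1.3} exactly right, in particular tracking the coefficients $\frac{1}{\varphi_k+1}$ and $\frac{\varphi_k}{\varphi_k+1}$ and the cross terms. The one genuine subtlety is the logical ordering: (ii) for index $k$ is used to prove (i) for index $k+1$, and (i) is what guarantees the denominator $\varphi_k$ in (ii) is positive; so the induction must be set up to prove "(i) and (ii) hold at stage $k$" simultaneously, deriving (ii) at stage $k$ from the identity \eqref{Halpern-1.3}\,+\,\eqref{Halpern-1.4} (which needs only $x^k\ne Tx^k$, not (i)), and then (i) at stage $k$ from (ii) at stage $k$ together with (i) at stage $k-1$. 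I would state this interleaving explicitly to avoid any circularity.
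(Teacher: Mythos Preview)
Your approach is essentially identical to the paper's: induction on $k$, with (ii) coming from the algebraic identity \eqref{Halpern-1.3} plus the cancellation \eqref{Halpern-1.4}, and (i) at the next index read off from (ii) via $\varphi_{k+1}\ge \varphi_k+1$. Two small corrections to your bookkeeping: in the last paragraph it should be ``(i) at stage $k$ from (ii) at stage $k{-}1$'' (as you had it correctly in the ``Proving (i)'' section), and the derivation of \eqref{Halpern-1.3} does require $\varphi_k\ne 0$, so (ii) at stage $k$ is not entirely independent of (i)---the paper handles this by proving (i) at $k$ \emph{first} in the inductive step (from (ii) at $k-1$), which then legitimizes the division by $\varphi_k$ needed for (ii) at $k$.
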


\begin{proof}
We prove the conclusions (i) and (ii) by induction. For $k=1$, from (\ref{eq3.1}), we get $\varphi_1=1$. Using (\ref{eq3.2}), we have
$$x^1=\frac{1}{2}x^0+\frac{1}{2}Tx^0\,\,\,{\rm or}\,\,\,Tx^0=2x^1-x^0.$$
Consequently,
\begin{align}\label{eq:Tx0}
\|Tx^0-Tx^1\|^2&=\|(x^1-Tx^1)+(x^1-x^0)\|^2 \nonumber\\
&=\|x^1-Tx^1\|^2+\|x^1-x^0\|^2+2\langle x^1-Tx^1, x^1-x^0 \rangle.
\end{align}
By nonexpansiveness of $T$, we have $\|Tx^0-Tx^1\|\le\|x^0-x^1\|$. It turns out from \eqref{eq:Tx0} that
  $$\|x^1-Tx^1\|^2\leq 2\langle x^1-Tx^1, x^0-x^1 \rangle.$$
That is, (ii) holds for $k=1$.

Suppose (i) and (ii) hold for $k-1$ ($k\geq 2$), that is,
\begin{equation}\label{eq3.3}
\varphi_{k-1}\geq k-1;
\end{equation}
\begin{equation}\label{eq3.4}
\|x^{k-1}-Tx^{k-1}\|^2\leq\frac{2}{\varphi_{k-1}}\langle x^{k-1}-Tx^{k-1},x^0-x^{k-1}\rangle.
\end{equation}
By (\ref{eq3.1}), (\ref{eq3.3}) and (\ref{eq3.4}), we get
\begin{equation*}
\aligned
\varphi_k&=\frac{2\langle x^{k-1}-Tx^{k-1}, x^0-x^{k-1}\rangle}{\|x^{k-1}-Tx^{k-1}\|^2}+1\\
&\geq \varphi_{k-1}+1\geq k-1+1=k.
\endaligned
\end{equation*}
Also from (\ref{eq3.2}), we derive that
\begin{equation}\label{eq3.5}
Tx^{k-1}=\frac{\varphi_k+1}{\varphi_k}x^k-\frac{1}{\varphi_k}x^0=x^k+\frac{1}{\varphi_k}(x^k-x^0).
\end{equation}
By nonexpansiveness of $T$ and (\ref{eq3.5}), we have
\begin{align}\label{eq3.6}
\|x^{k-1}-x^k\|^2&\geq \|Tx^{k-1}-Tx^k\|^2=\|(x^k-Tx^k)+\frac{1}{\varphi_k}(x^k-x^0)\|^2 \nonumber\\
&=\|x^k-Tx^k\|^2+\frac{2}{\varphi_k}\langle x^k-Tx^k, x^k-x^0 \rangle+\frac{1}{\varphi_k^2}\|x^k-x^0\|^2.
\end{align}
On the other hand, from (\ref{eq3.2}) again, we obtain
\begin{align}\label{eq3.7}
\|x^{k-1}-x^k\|^2
&=\|(x^{k-1}-Tx^{k-1})-\frac{1}{\varphi_k+1}(x^0-Tx^{k-1})\|^2 \nonumber\\
&=\|x^{k-1}-Tx^{k-1}\|^2-\frac{2}{\varphi_k+1}\langle x^{k-1}-Tx^{k-1}, x^0-Tx^{k-1} \rangle \nonumber\\
&\quad +\frac{1}{(\varphi_k+1)^2}\|x^0-Tx^{k-1}\|^2
\end{align}
and (using (3.6))
\begin{equation}\label{eq3.8}
\frac{1}{\varphi_k^2}\|x^k-x^0\|^2=\frac{1}{(\varphi_k+1)^2}\|x^0-Tx^{k-1}\|^2.
\end{equation}
Combining \eqref{eq3.6}-\eqref{eq3.8}, we obtain
\begin{equation}\label{eq3.9}
\aligned
0&\ge\|Tx^{k-1}-Tx^k\|^2-\|x^{k-1}-x^k\|^2\\
&=\|x^k-Tx^k\|^2-\|x^{k-1}-Tx^{k-1}\|^2+\frac{2}{\varphi_k}\langle x^k-Tx^k, x^k-x^0 \rangle\\
&\quad +\frac{2}{\varphi_k+1}\langle x^{k-1}-Tx^{k-1}, x^0-Tx^{k-1} \rangle.
\endaligned
\end{equation}
On the other hand, it follows from (\ref{eq3.1}) that
\begin{align*}
\varphi_k\|x^{k-1}-Tx^{k-1}\|^2&=2\langle x^{k-1}-Tx^{k-1}, x^0-x^{k-1}\rangle+\|x^{k-1}-Tx^{k-1}\|^2\\
&=2\langle x^{k-1}-Tx^{k-1}, x^0-Tx^{k-1}+Tx^{k-1}-x^{k-1}\rangle+\|x^{k-1}-Tx^{k-1}\|^2\\
&=2\langle x^{k-1}-Tx^{k-1}, x^0-Tx^{k-1}\rangle-\|x^{k-1}-Tx^{k-1}\|^2.
\end{align*}
This implies that
\begin{equation}\label{eq3.10}
\|x^{k-1}-Tx^{k-1}\|^2=\frac{2}{\varphi_k+1}\langle x^{k-1}-Tx^{k-1}, x^0-Tx^{k-1}\rangle.
\end{equation}
Substituting (\ref{eq3.10}) into (\ref{eq3.9}) yields
\begin{equation*}
0\ge\|x^k-Tx^k\|^2+\frac{2}{\varphi_k}\langle x^k-Tx^k, x^k-x^0 \rangle.
\end{equation*}
This is (ii) at $k$, and the proof is finished.
\end{proof}

\subsection{Convergence analysis}

We  are now in the position to prove the strong convergence of Algorithm \ref{Al:3.1}.

\begin{theorem}\label{th3.1}
Assume $\mathcal{H}$ is a real Hilbert space and $T: \mathcal{H}\to \mathcal{H}$ a nonexpansive mapping such that $Fix(T)\not=\emptyset$.
Let  $ \{x^k\}_{k=0}^\infty$ be a sequence generated by Algorithm \ref{Al:3.1}. Then $\{x^k\}_{k=0}^\infty$ converges strongly to
a fixed point of $T$.
\end{theorem}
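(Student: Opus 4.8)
The plan is to graft the classical Halpern convergence machinery onto the tailor-made estimate provided by Lemma~\ref{lem3.1}(ii), after first disposing of boundedness and asymptotic regularity. Fix an arbitrary $x^*\in Fix(T)$. Writing the iteration as $x^k=\lambda_k x^0+(1-\lambda_k)Tx^{k-1}$ with $\lambda_k=\frac{1}{\varphi_k+1}$ and using nonexpansiveness, a one-line induction gives $\|x^k-x^*\|\le\|x^0-x^*\|$ for all $k$, so $\{x^k\}$, $\{Tx^{k-1}\}$ and $\{x^0-Tx^{k-1}\}$ are bounded. Then, from Lemma~\ref{lem3.1}(ii) and Cauchy--Schwarz, $\|x^k-Tx^k\|\le\frac{2}{\varphi_k}\|x^0-x^k\|$, and Lemma~\ref{lem3.1}(i) forces $\varphi_k\ge k$, whence $\|x^k-Tx^k\|\to0$. (Sharpening this by the elementary coercivity estimate $\langle x^k-Tx^k,\,x^k-x^*\rangle\ge\tfrac12\|x^k-Tx^k\|^2$ yields the rate \eqref{He-Xu-New}; for convergence alone the crude bound is enough.) Consequently, by the demiclosedness principle (Lemma~\ref{lemmaGK}), every weak cluster point of $\{x^k\}$ lies in $Fix(T)$, i.e.\ $\omega_w(x^k)\subset Fix(T)$.

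Now I would split according to whether $\sum_k\lambda_k<\infty$ or $=\infty$. In the summable case, from $x^k-Tx^{k-1}=\lambda_k(x^0-Tx^{k-1})$ together with the displacement bound $\|x^{k-1}-Tx^{k-1}\|\le 2\lambda_{k-1}\|x^0-x^*\|$, one gets $\|x^k-x^{k-1}\|\le\lambda_k\|x^0-Tx^{k-1}\|+\|x^{k-1}-Tx^{k-1}\|$, which is summable; hence $\{x^k\}$ is Cauchy and converges strongly to some $\bar x$, and $\|x^k-Tx^k\|\to0$ with continuity of $T$ forces $\bar x\in Fix(T)$.

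In the non-summable case, let $p:=P_{Fix(T)}x^0$. Expanding $\|x^k-p\|^2$ via $\|u+v\|^2\le\|u\|^2+2\langle v,u+v\rangle$ with $u=(1-\lambda_k)(Tx^{k-1}-p)$, $v=\lambda_k(x^0-p)$, and using $(1-\lambda_k)^2\le1-\lambda_k$ and nonexpansiveness, one obtains
\[\|x^k-p\|^2\le(1-\lambda_k)\|x^{k-1}-p\|^2+2\lambda_k\langle x^0-p,\,x^k-p\rangle.\]
To handle the cross term, pick a subsequence realizing $\limsup_k\langle x^0-p,x^k-p\rangle$, extract a weakly convergent sub-subsequence with limit $z$, note $z\in Fix(T)$ by the previous paragraph, and invoke the projection inequality (Lemma~\ref{lem22}) to get $\langle x^0-p,z-p\rangle\le0$; hence $\limsup_k\langle x^0-p,x^k-p\rangle\le0$. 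Applying Lemma~\ref{lem25} with $\gamma_k=\lambda_k$ (so $\sum_k\gamma_k=\infty$) and $\delta_k=2\langle x^0-p,x^k-p\rangle$ yields $\|x^k-p\|^2\to0$, i.e.\ $x^k\to p\in Fix(T)$.

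The only genuinely delicate point is the case distinction itself: because $\varphi_k$ is controlled only from below ($\varphi_k\ge k$), there is no a priori guarantee that $\sum_k\lambda_k=\infty$, so the summable regime cannot be fed into Lemma~\ref{lem25} and must instead be settled by the Cauchy argument. Beyond that, the argument is the standard Halpern scheme, with the adaptive inequality Lemma~\ref{lem3.1}(ii) playing the role normally played by conditions (C1)--(C7).
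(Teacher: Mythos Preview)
Your proof is correct and follows essentially the same architecture as the paper's: boundedness, asymptotic regularity via Lemma~\ref{lem3.1}, demiclosedness, and then the same case split on $\sum_k\lambda_k$, with Lemma~\ref{lem25} in the divergent case and a Cauchy argument in the summable one. The only cosmetic differences are that in the summable case you go straight to Cauchy (the paper first routes through Lemmas~\ref{lem-2} and~\ref{lem-1} before doing so), and in the divergent case you use the subdifferential inequality $\|u+v\|^2\le\|u\|^2+2\langle v,u+v\rangle$ to land on $\langle x^0-p,x^k-p\rangle$, whereas the paper expands the square directly and handles $\langle x^0-q,x^{k-1}-q\rangle$ plus lower-order terms; both are standard and equivalent.
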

\begin{proof}
We first prove that $\{x^k\}_{k=0}^\infty$ is bounded. Indeed, for any $p\in Fix(T)$, we have from (\ref{eq3.2}) that

\begin{equation}\label{bounded1}
\aligned
\|x^{k}-p\|=&\left\|\frac{1}{\varphi_k+1}(x^0-p)+\frac{\varphi_k}{\varphi_k+1}(Tx^{k-1}-p)\right\|\\
\leq &\frac{1}{\varphi_k+1}\|x^0-p\|+\frac{\varphi_k}{\varphi_k+1}\|Tx^{k-1}-p\|\\
\leq &\frac{1}{\varphi_k+1}\|x^0-p\|+\frac{\varphi_k}{\varphi_k+1}\|x^{k-1}-p\|\\
\leq &\max\{\|x^0-p\|,\|x^{k-1}-p\|\}.
\endaligned
\end{equation}
By induction, we get
\begin{equation*}
\|x^{k}-p\|\leq \|x^0-p\|\quad \mbox{for all $k\ge 0$}.
\end{equation*}
This means that $\{x^k\}_{k=0}^\infty$ is bounded and hence $\omega_w(x^k)\neq \emptyset$. On the other hand, it follows from Lemma \ref{lem3.1} that
\begin{equation}\label{bounded2}
\|x^k-Tx^k\|\leq \frac{2}{\varphi_k}\|x^0-x^k\|\leq \frac{2}{k}\|x^0-x^k\|
\end{equation}
for all $k\ge 1$. Consequently, the boundedness of $\{x^k\}$ ensures that $\|x^k-Tx^k\|\rightarrow 0$ as $k\rightarrow \infty$.
By Lemma \ref{lemmaGK}, we assert that $\omega_w(x^k)\subset Fix(T)$.

Next, we distinguish two cases.

Case 1: $\sum_{k=0}^\infty \frac{1}{\varphi_k+1}=\infty$.

In this case, we shall prove that $x^k\rightarrow q:=P_{Fix(T)}x^0$, i.e.,
the fixed point of $T$ which is closest from $Fix(T)$ to $x^0$.
To see this we use the definition (\ref{eq3.2}) of $x^k$ to deduce that
\begin{align}\label{case1-1}
\|x^k-q\|^2=&\|\frac{1}{\varphi_k+1}(x^0-q)+\frac{\varphi_k}{\varphi_k+1}(Tx^{k-1}-q)\|^2 \nonumber\\
= &\frac{1}{(\varphi_k+1)^2}\|x^0-q\|^2+\frac{\varphi_k^2}{(\varphi_k+1)^2}\|Tx^{k-1}-q\|^2 \nonumber\\
&+\frac{2\varphi_k}{(\varphi_k+1)^2}\langle x^0-q, Tx^{k-1}-q\rangle \nonumber\\
\leq & (1-\frac{1}{\varphi_k+1})\|x^{k-1}-q\|^2+\frac{1}{(\varphi_k+1)^2}\|x^0-q\|^2 \nonumber\\
&+\frac{2\varphi_k}{(\varphi_k+1)^2}\langle x^0-q, x^{k-1}-q\rangle \nonumber\\
&+\frac{2\varphi_k}{(\varphi_k+1)^2}\|x^0-q\|\|x^{k-1}-Tx^{k-1}\|.
\end{align}
We rewrite \eqref{case1-1} in a more compact form as follows:
\begin{equation}\label{eq:case-1b}
a_{k+1}\le (1-\gamma_k)a_k+\gamma_k\delta_k
\end{equation}
where $a_{k+1}=\|x^k-q\|^2$, $\gamma_k=\frac{1}{\varphi_k+1}$, and
$$\delta_k=\frac{1}{\varphi_k+1}\|x^0-q\|^2
+\frac{2\varphi_k}{\varphi_k+1}(\langle x^0-q, x^{k-1}-q\rangle+\|x^0-q\|\|x^{k-1}-Tx^{k-1}\|).$$
We then have $\gamma_k\to 0$ as $k\to\infty$ and $\sum_{k=1}^\infty\gamma_k=\infty$, due to the assumption of Case 1.

Noticing the fact $\omega_w(x^k)\subset Fix(T)$ and by Lemma \ref{lem22} (recalling that $q=P_{Fix(T)}x^0$), we have
\begin{equation}\label{eq:limsup}
\limsup_{k\rightarrow \infty}\langle x^0-q, x^{k-1}-q\rangle\leq \sup_{p\in \omega_w(x^k)}\langle x^0-q, p-q\rangle\leq 0.
\end{equation}
The facts $\varphi_k\to\infty$ and $\|x^k-Tx^k\|\to 0$ together with \eqref{eq:limsup} readily imply
$\limsup_{k\to\infty}\delta_k\le 0$. Hence, Lemma \ref{lem25} is applicable to \eqref{eq:case-1b}
to get $a_k\to 0$, i.e., $\|x^k-q\|\to 0$.
This finishes the proof of Case 1.

Case 2: $\sum_{k=0}^\infty \frac{1}{\varphi_k+1}<\infty$.

 To handle this case we again set $\gamma_k= \frac{1}{\varphi_k+1}$ for all $k$; thus, $\sum_{k=0}^\infty \gamma_k<\infty$.
Now take $p\in Fix(T)$ to derive from the definition (\ref{eq3.2}) that
\begin{equation}\label{case2-1}
\aligned
\|x^k-p\|&\leq \frac{1}{\varphi_k+1}\|x^0-p\|+\frac{\varphi_k}{\varphi_k+1}\|Tx^{k-1}-p\|\\
&\leq \|x^{k-1}-p\|+\gamma_k\|x^0-p\|.
\endaligned
\end{equation}
Applying Lemma \ref{lem-2} to (\ref{case2-1}) yields that $\lim_{k\rightarrow \infty}\|x^k-p\|$ exists.
This and the fact $\omega_w(x^k)\subset Fix(T)$ make Lemma \ref{lem-1} applicable and we conclude that $\{x^k\}_{k=0}^\infty$ converges weakly  to a
fixed point of $T$.

To prove the strong convergence of $\{x^k\}_{k=0}^\infty$, it suffices to show that $\{x^k\}_{k=0}^\infty$ is a Cauchy sequence.
We rewrite the first inequality in \eqref{bounded2} in terms of $\gamma_k$ as
$$\|x^k-Tx^k\|\le \frac{2\gamma_k}{1-\gamma_k}\|x^0-x^k\|\le 4\gamma_k\|x^0-x^k\|$$
for all $k\ge 1$ since $\gamma_k\le\frac{1}{k+1}\le\frac12$ for all $k\ge 1$.
It turns out that
\begin{equation}\label{case2-2}
\sum_{k=0}^\infty\|x^k-Tx^k\|<\infty.
\end{equation}
On the other hand, from (\ref{eq3.2}), we get
\begin{equation}\label{case2-3}
\aligned
\|x^k-x^{k-1}\|&=\|\frac{1}{\varphi_k+1}(x^0-x^{k-1})+\frac{\varphi_k}{\varphi_k+1}(Tx^{k-1}-x^{k-1})\|\\
&\leq \gamma_k\|x^0-x^{k-1}\|+\|x^{k-1}-Tx^{k-1}\|.
\endaligned
\end{equation}
Hence (\ref{case2-3}) together with (\ref{case2-2}), the fact $\sum_{k=1}^\infty\gamma_k<\infty$ and the boundedness of $\{x_k\}$ leads to
$$\sum_{k=0}^\infty\|x^k-x^{k-1}\|<\infty.$$
This proves that $\{x^k\}_{k=0}^\infty$ is a Cauchy sequence, and the proof of Case 2 is ended.
\end{proof}

\begin{remark}
As pointed out by Halpern \cite{Halpern1967}, in the Halpern iteration  \eqref{Halpern-1},
if the anchoring parameters $\{\lambda_k\}$ are chosen in an open loop way, then the divergence
condition (C2), i.e., $\sum_{k=1}^\infty\lambda_k=\infty$, is a necessary condition for convergence of the iterates $\{x^k\}$.
However, our result in Theorem \ref{th3.1} shows that (C2) is no longer necessary for the convergence of the Halpern iterates
$\{x^k\}$ in the situation where the anchoring parameters $\{\lambda_k\}$ are chosen in an adaptive way.

An advantage of the open loop manner is perhaps that the divergence condition (C2) forces the Halpern iterates $\{x^k\}$
converge to $P_{Fix(T)}u$. Namely, the limit of the iterates $\{x^k\}$ can be identified as the nearest point projection
of the anchor $u$ onto the fixed point set $Fix(T)$.
A natural question thus arisen for the Halpern Algorithm \ref{Al:3.1} is this:
would it be possible that $\sum_{k=0}^\infty \frac{1}{\varphi_k+1}<\infty$ and the iterates $\{x^k\}_{k=0}^\infty$ converge
to a fixed point of $T$ different from $P_{Fix(T)}u$? The following example provides an affirmative answer to this question.
However it remains an interesting problem of how to identify the limit of the iterates $\{x^k\}$ of Algorithm \ref{Al:3.1}
in the case where $\sum_{k=0}^\infty \frac{1}{\varphi_k+1}<\infty$. We will discuss this problem partially in Section \ref{Sec:4}.
\end{remark}
\begin{example}
Let $\mathcal{H}=\mathbb{R}^2$, $D=\{(\xi,\eta)\in\mathbb{R}^2\,|\,\xi+\eta\geq 2\}$ and $H=\{(\xi,\eta)\in\mathbb{R}^2\,|\,\eta=2\}$.
Let $P_H$ and $P_D$ be the projections onto
$H$ and $D$, respectively. It is not hard to find that, for each point $(\xi',\eta')\in \mathbb{R}^2$,
$P_H(\xi',\eta')=(\xi',2)$ and
\begin{equation*}
P_D(\xi',\eta')=\left\{\begin{array}{ll}
(\xi',\eta')&\quad \mbox{if $\xi'+\eta'\ge 2$},\\
(1-\frac12(\eta'-\xi'),1+\frac12(\eta'-\xi'))&\quad \mbox{if $\xi'+\eta'<2$.}\end{array}\right.
\end{equation*}
Now define $T:=P_HP_D$. Then $T$ is nonexpansive and its fixed point set
$Fix(T)=\{(\xi,\eta)\in\mathbb{R}^2\,|\,\xi\geq 0,\,\eta=2\}$.

Taking the initial guess $x^0\equiv (\xi_0,\eta_0)^\top=(0,0)^\top$, we now calculate the iterative sequence $\{x^k\}_{k=1}^\infty$ generated by Algorithm \ref{Al:3.1}. Set $x^{k}=(\xi_k,\eta_k)^\top$ for $k\geq 1$. From (\ref{eq3.1}), we have
\begin{equation}\label{canshu}
\frac{\varphi_k}{1+\varphi_k}=\frac{\|Tx^{k-1}\|^2-\|x^{k-1}\|^2}{2(\|Tx^{k-1}\|^2-\langle x^{k-1},T
x^{k-1}\rangle)}, \,\,\,k\geq 1.
\end{equation}

By using (\ref{canshu}),  we get
\begin{align*}
x^1&=\frac{\varphi_1}{1+\varphi_1}Tx^0=\frac{1}{2}P_HP_Dx^0=\frac{1}{2}P_H(1,1)^\top=\frac{1}{2}(1,2)^\top=(\frac{1}{2},1)^\top,\\
Tx^1&=P_HP_Dx^1=P_H(\frac{3}{4},\frac{5}{4})^\top=(\frac{3}{4},2)^\top,\,\,\,\frac{\varphi_2}{1+\varphi_2}=\frac{53}{70},\ {\rm and} \\
x^2&=\frac{\varphi_2}{1+\varphi_2}Tx^1=\frac{53}{70}(\frac{3}{4},2)^\top=(\frac{159}{280},\frac{53}{35})^\top.
\end{align*}
It turns out that $x^2\in D$. By induction, we can easily prove that for all $k\ge 3$,
$x^{k-1}\in D$,
 \begin{equation}\label{canshu-1}
 \frac{\varphi_k}{1+\varphi_k}=\frac{2+\eta_{k-1}}{4},
 \end{equation}
  and
\begin{equation}\label{ditui}
(\xi_k,\eta_k)^\top=\frac{2+\eta_{k-1}}{4}(\xi_{k-1},2)^\top.
\end{equation}
We rewrite (\ref{ditui}) as
\begin{align}
\xi_k&=\frac{2+\eta_{k-1}}{4}\xi_{k-1}, \label{ditui-a}\\
\eta_k&=1+\frac{1}{2}\eta_{k-1}\label{ditui-b}
\end{align}
for $k\ge 3$. Recursively applying \eqref{ditui-b} yields that, for $k\ge 3$,
\begin{align}\label{ditui-c}
\eta_k=\sum_{j=0}^{k-3}\frac{1}{2^j}+\frac{1}{2^{k-2}}\eta_{2}=2\left(1-\frac{1}{2^{k-2}}\right)+\frac{1}{2^{k-2}}\eta_2.
\end{align}
Then \eqref{canshu-1} and \eqref{ditui-a} are reduced to (for $k\ge 3$)
\begin{equation}\label{canshu-1a}
 \frac{\varphi_k}{1+\varphi_k}=1-\frac{1}{2^{k-2}}+\frac{1}{2^{k-1}}\eta_2
 \end{equation}
 and respectively
\begin{equation}\label{ditui-a1}
\xi_k=\left(1-\frac{1}{2^{k-2}}+\frac{1}{2^{k-1}}\eta_2\right)\xi_{k-1}
=\prod_{j=3}^{k}\left(1-\frac{1}{2^{j-2}}+\frac{1}{2^{j-1}}\eta_2\right)\xi_{2}.
\end{equation}

From \eqref{canshu-1a}, we get
\begin{equation}\label{canshu-2b}
\frac{1}{1+\varphi_k}=\frac{1}{2^{k-2}}-\frac{1}{2^{k-1}}\eta_2
=\frac{1}{2^{k-2}}(1-\frac{1}{2}\eta_2),\quad k\geq 3.
\end{equation}
Hence, $\sum_{k=3}^\infty \frac{1}{1+\varphi_k}<\infty$.

By \eqref{ditui-a1} and \eqref{ditui-c}, we assert that the sequence of iterates $x^k=(\xi_k,\eta_k)^\top$
converges to the point $(\xi^*,2)\in Fix(T)$, where
\begin{equation}\label{xi}
\xi^*=\prod_{j=3}^{\infty}\left(1-\frac{1}{2^{j-2}}+\frac{1}{2^{j-1}}\eta_2\right)\xi_{2}.
\end{equation}

It is evident that $\xi^*\in (0,\xi_2)$ as $\eta_2=\frac{53}{35}$ and $\xi_2=\frac{159}{280}<1$. However, since $P_{Fix(T)}x^0=(0,2)^\top$,
we arrive at the conclusion that the limit $(\xi^*, 2)$ of the Halpern iterates $\{x^k\}$
is distinct from the projection of the anchor (which is also the initial point) $x^0$, as opposed to the case
where $\sum_{k=1}^\infty\frac{1}{\varphi_k+1}=\infty$.

\end{example}

\subsection{Rate of asymptotic regularity}

Below we give an estimate of the asymptotic regularity rate of Algorithm \ref{Al:3.1}.
\begin{theorem}\label{th3.2}
Let  $ \{x^k\}_{k=0}^\infty$ be a sequence generated by Algorithm \ref{Al:3.1}. Then the following inequality holds:
\begin{equation}\label{rate}
\|x^k-Tx^k\|\leq \frac{2}{\varphi_k+1}\|x^0-x^*\|,\quad k\geq 1,
\end{equation}
where $\varphi_k$ is given by (\ref{eq3.1}) and  $x^*$ is an arbitrary fixed point of $T$.
\end{theorem}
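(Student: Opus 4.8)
The plan is to combine the key inequality in Lemma~\ref{lem3.1}(ii) with the elementary monotonicity property of $I-T$ for a nonexpansive map. Fix an arbitrary $x^*\in Fix(T)$. The starting point is Lemma~\ref{lem3.1}(ii),
$$\|x^k-Tx^k\|^2\leq\frac{2}{\varphi_k}\langle x^k-Tx^k,x^0-x^k\rangle,$$
which I would rewrite by inserting $x^*$, i.e. splitting $x^0-x^k=(x^0-x^*)-(x^k-x^*)$, to get
$$\|x^k-Tx^k\|^2\leq\frac{2}{\varphi_k}\langle x^k-Tx^k,x^0-x^*\rangle-\frac{2}{\varphi_k}\langle x^k-Tx^k,x^k-x^*\rangle.$$

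Next I would use the standard fact that for every nonexpansive $T$ and every $p\in Fix(T)$ one has $2\langle x-Tx,x-p\rangle\geq\|x-Tx\|^2$ (which follows at once by expanding $\|Tx-p\|^2=\|Tx-Tp\|^2\leq\|x-p\|^2$). Applying this with $x=x^k$, $p=x^*$ gives $\langle x^k-Tx^k,x^k-x^*\rangle\geq\tfrac12\|x^k-Tx^k\|^2$; substituting into the previous display and rearranging yields
$$(\varphi_k+1)\|x^k-Tx^k\|^2\leq 2\langle x^k-Tx^k,x^0-x^*\rangle.$$
Finally I would bound the right-hand side by Cauchy--Schwarz, obtaining $(\varphi_k+1)\|x^k-Tx^k\|^2\leq 2\|x^k-Tx^k\|\,\|x^0-x^*\|$, and divide through by $\|x^k-Tx^k\|>0$ (legitimate under the standing assumption $x^k\neq Tx^k$) to conclude $\|x^k-Tx^k\|\leq\frac{2}{\varphi_k+1}\|x^0-x^*\|$.

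There is no real obstacle; the only point worth noting is that Lemma~\ref{lem3.1}(ii) by itself delivers only the weaker bound with $\varphi_k$ (rather than $\varphi_k+1$) in the denominator, and it is precisely the extra monotonicity term $-\frac{2}{\varphi_k}\langle x^k-Tx^k,x^k-x^*\rangle\leq-\frac{1}{\varphi_k}\|x^k-Tx^k\|^2$ that sharpens $\varphi_k$ to $\varphi_k+1$. Together with the already established estimate $\varphi_k\geq k$ from Lemma~\ref{lem3.1}(i), this both recovers and refines Lieder's rate~\eqref{Lieder}.
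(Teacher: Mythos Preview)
Your proof is correct. Both your argument and the paper's rely on the same two ingredients---Lemma~\ref{lem3.1}(ii) and the inequality $\|Tx^k-x^*\|\le\|x^k-x^*\|$ (equivalently, $2\langle x^k-Tx^k,x^k-x^*\rangle\ge\|x^k-Tx^k\|^2$)---but the packaging differs. The paper first establishes a general Hilbert-space identity
\[
\varphi\|a\|^2+2\langle a,b\rangle+\|c\|^2-\|a+c\|^2
=\tfrac{\varphi+1}{2}\|a\|^2-\tfrac{2}{\varphi+1}\|a+c-b\|^2+\tfrac{2}{\varphi+1}\bigl\|a+c-b-\tfrac{\varphi+1}{2}a\bigr\|^2,
\]
substitutes $a=x^k-Tx^k$, $b=x^k-x^0$, $c=Tx^k-x^*$, observes that the left side is nonpositive by the two ingredients above, and then drops the last (nonnegative) square on the right. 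Your approach reaches the same inequality $(\varphi_k+1)\|x^k-Tx^k\|^2\le 2\langle x^k-Tx^k,x^0-x^*\rangle$ directly and finishes with Cauchy--Schwarz; the paper's ``drop the square'' step is precisely the completing-the-square form of that Cauchy--Schwarz bound. So the arguments are equivalent in content, with yours being the more transparent and shorter route, while the paper's identity has the minor advantage of yielding the squared estimate $\|x^k-Tx^k\|^2\le\frac{4}{(\varphi_k+1)^2}\|x^0-x^*\|^2$ without needing to divide by $\|x^k-Tx^k\|$.
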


\begin{proof}
It is easy to verify that the equality:
\begin{align}\label{eq3.11}
&\varphi \|a\|^2+2\langle a, b\rangle+\|c\|^2-\|a+c\|^2 \nonumber\\
&=\frac{\varphi+1}{2}\|a\|^2-\frac{2}{\varphi+1}\|a+c-b\|^2+\frac{2}{\varphi+1}\|a+c-b-\frac{\varphi+1}{2}a\|^2
\end{align}
holds for all $a,b,c\in \mathcal{H}$ and arbitrary positive number $\varphi$. Setting $\varphi:=\varphi_k$,
$a:=x^k-Tx^k$, $b:=x^k-x^0$, and $c:=Tx^k-x^*$, we have $a+c=x^k-x^*$, $a+c-b=x^0-x^*$. Consequently, it follows from (\ref{eq3.11}) that
\begin{align}\label{eq3.12}
&\varphi_k \|x^k-Tx^k\|^2+2\langle x^k-Tx^k, x^k-x^0\rangle+\|Tx^k-x^*\|^2-\|x^k-x^*\|^2 \nonumber\\
&=\frac{\varphi_k+1}{2}\|x^k-Tx^k\|^2-\frac{2}{\varphi_k+1}\|x^0-x^*\|^2
+\frac{2}{\varphi_k+1}\|x^0-x^*-\frac{\varphi_k+1}{2}(x^k-Tx^k)\|^2.
\end{align}
Combining (\ref{eq3.12}) and  Lemma \ref{lem3.1} (ii), we have
$$\frac{\varphi_k+1}{2}\|x^k-Tx^k\|^2-\frac{2}{\varphi_k+1}\|x^0-x^*\|^2\leq 0.$$
It is immediately clear that the estimate (\ref{rate}) follows.
\end{proof}

\begin{remark}
Since, by Lemma \ref{lem3.1}(ii),  $\varphi_k\geq k$ for all $k\geq 1$, we see that (\ref{rate}) is indeed
a further improvement of (\ref{Lieder}).
Moreover, applying Theorem \ref{th3.2} to Example 1, we obtain from (\ref{canshu-2b}) that
\begin{equation}\label{tight}
\|x^k-Tx^k\|\leq \frac{1}{2^{k-3}}(1-\frac{1}{2}\eta_2)\|x^*\|,\quad k\geq 3,
\end{equation}
where $x^*\in Fix(T)$.
 On the other hand,  we have $Tx^k=(\xi_k,2)^\top$. Consequently,
$x^k-Tx^k=(0,\eta_k-2)^\top$ and by \eqref{ditui-c}
$$\|x^k-Tx^k\|=2-\eta_k=\frac{1}{2^{k-3}}(1-\frac{1}{2}\eta_2).
$$
This shows the superiority of the adaptive parameter sequence given by (\ref{eq3.1}).
\end{remark}
\begin{remark}
The rate (\ref{rate}) is tight, which can be shown by Example 3.1 in \cite{Lieder2021}. In fact, by direct calculation,
it is easy to verify that $\varphi_k=k$ for this example.
\end{remark}

\begin{remark}
The study of convergence and asymptotic regularity of Halpern's iteration has recently been extended to
some of its variations such as modified Halpern iteration \cite{KX2005} and
Tikhonov-Mann iteration \cite{BCM2019,CKL2022}. It is worth of mentioning that \cite{CKL2022}
obtained quantitative rate of asymptotic regularity and metastability of Tikhonov-Mann iteration in CAT(0) spaces.
\end{remark}

\section{Characterization of the limit of Adaptive Halpern Iterates}
\label{Sec:4}

Let $C$ be a nonempty closed convex subset of a Hilbert space $\mathcal{H}$ and let
$T: C\to C$ be a nonexpansive mapping with $Fix(T):=\{x\in C: Tx=x\}\not=\emptyset$.
Take $x^0\in C$ and let $\{x^k\}_{k\ge 1}$ be generated by Halpern's algorithm:
\begin{equation}\label{Halpern-C}
 x^k=\lambda_k u+(1-\lambda_k)Tx^{k-1},\quad k\geq 1,
 \end{equation}
where $u\in C$ is an anchor and $\{\lambda_k\}\subset (0,1)$ is a sequence of anchoring parameters.
\begin{theorem}
Assume $\lambda_k\to 0$ and $x^k\to q$ in norm as $k\to\infty$.
\begin{itemize}
\item[(i)] For each $z\in\mathcal{H}$, we have
\begin{equation}\label{eq:xk1}
\lim_{k\to\infty}\frac{\|x^k-z\|^2-\|Tx^{k-1}-z\|^2}{\lambda_k}=2\langle u-q,q-z\rangle.
\end{equation}
\item[(ii)] Set
\begin{equation}\label{eq:H0}
H_0=\left\{z\in \mathcal{H}: \lim_{k\to\infty}\frac{\|x^k-z\|^2-\|Tx^{k-1}-z\|^2}{\lambda_k}=0\right\}
\end{equation}
and
\begin{equation}
H_{+}=\left\{z\in \mathcal{H}: \lim_{k\to\infty}\frac{\|x^k-z\|^2-\|Tx^{k-1}-z\|^2}{\lambda_k}\ge 0\right\}.
\end{equation}
Then
\begin{equation}\label{eq:H01}
q=P_{H_0\cap C}u=P_{H_{+}\cap Fix(T)}u.
\end{equation}
\item[(iii)] If, in addition, $\sum_{k=1}^\infty \lambda_k=\infty$, then $q=P_{H_0\cap C}u=P_{Fix(T)}u$.
\end{itemize}
\end{theorem}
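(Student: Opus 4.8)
The plan is to handle the three parts in turn; (i) and (ii) are essentially bookkeeping, and (iii) is the only place where $\sum_{k=1}^\infty\lambda_k=\infty$ enters in an essential way. For (i), I would first note that $Tx^{k-1}=\frac{1}{1-\lambda_k}\bigl(x^k-\lambda_k u\bigr)\to q$ as $k\to\infty$, since $\lambda_k\to0$ and $x^k\to q$; by continuity of $T$ this also gives $Tx^{k-1}\to Tq$, whence $q\in Fix(T)$, a fact I record for later use. Then one expands
$$\|x^k-z\|^2=\bigl\|\lambda_k(u-z)+(1-\lambda_k)(Tx^{k-1}-z)\bigr\|^2,$$
subtracts $\|Tx^{k-1}-z\|^2$, divides by $\lambda_k$, and uses $(1-\lambda_k)^2-1=\lambda_k(\lambda_k-2)$ to obtain
$$\frac{\|x^k-z\|^2-\|Tx^{k-1}-z\|^2}{\lambda_k}=\lambda_k\|u-z\|^2+(\lambda_k-2)\|Tx^{k-1}-z\|^2+2(1-\lambda_k)\langle u-z,Tx^{k-1}-z\rangle.$$
Passing to the limit with $\lambda_k\to0$ and $Tx^{k-1}\to q$, the right-hand side tends to $-2\|q-z\|^2+2\langle u-z,q-z\rangle=2\langle u-q,q-z\rangle$, which is \eqref{eq:xk1}.

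For (ii), I would use (i) to rewrite $H_0=\{z:\langle u-q,z-q\rangle=0\}$ and $H_{+}=\{z:\langle u-q,z-q\rangle\le0\}$; these are a closed hyperplane and a closed half-space through $q$ (both equal to $\mathcal H$ when $u=q$), hence closed and convex, so $H_0\cap C$ and $H_{+}\cap Fix(T)$ are closed and convex (recall $Fix(T)$ is closed and convex). Since each $x^k\in C$ and $C$ is closed, $q\in C$; combining this with $q\in Fix(T)$ and the obvious inclusion $q\in H_0\subset H_{+}$ gives $q\in H_0\cap C$ and $q\in H_{+}\cap Fix(T)$. For every $v\in H_{+}$ — a fortiori every $v\in H_0$ — one has $\langle u-q,v-q\rangle\le0$, so Lemma \ref{lem22} (the projection characterization) yields $q=P_{H_0\cap C}u=P_{H_{+}\cap Fix(T)}u$, which is \eqref{eq:H01}.

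For (iii), the main tool is the familiar refinement of the Halpern estimate: taking the inner product of $x^k-v=(1-\lambda_k)(Tx^{k-1}-v)+\lambda_k(u-v)$ with $x^k-v$ and using Cauchy--Schwarz, nonexpansiveness ($\|Tx^{k-1}-v\|\le\|x^{k-1}-v\|$) and $2ab\le a^2+b^2$ gives, for every $v\in Fix(T)$,
$$\|x^k-v\|^2\le(1-\gamma_k)\|x^{k-1}-v\|^2+\gamma_k\langle u-v,x^k-v\rangle,\qquad \gamma_k:=\frac{2\lambda_k}{1+\lambda_k}\in(0,1),$$
where $\gamma_k\to0$ and $\sum_k\gamma_k=\infty$ (since $\gamma_k\ge\lambda_k$). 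Writing $b_k:=\|x^k-v\|^2$, this rearranges to $b_k-b_{k-1}\le\gamma_k\bigl(\langle u-v,x^k-v\rangle-b_{k-1}\bigr)$; because $b_k\to\|q-v\|^2$ and $x^k\to q$, the bracket converges to $\langle u-v,q-v\rangle-\|q-v\|^2=\langle u-q,q-v\rangle$. If this limit were strictly negative, then $b_k-b_{k-1}\le\frac12\langle u-q,q-v\rangle\,\gamma_k$ for all large $k$, and summing over $k$ and using $\sum_k\gamma_k=\infty$ would force $b_k\to-\infty$, contradicting $b_k\ge0$. Hence $\langle u-q,v-q\rangle\le0$ for every $v\in Fix(T)$, i.e. $Fix(T)\subset H_{+}$, so $H_{+}\cap Fix(T)=Fix(T)$, and (ii) then gives $q=P_{H_0\cap C}u=P_{Fix(T)}u$. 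The main obstacle is precisely this last step: parts (i)–(ii) are just algebra plus an application of Lemma \ref{lem22}, but here one genuinely needs $\sum_k\lambda_k=\infty$, and care is required to pass to the limit inside the bracket before summing rather than after.
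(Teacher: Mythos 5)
Your proof is correct. Parts (i) and (ii) follow essentially the same route as the paper: expand $\|x^k-z\|^2$ via the iteration, divide by $\lambda_k$, pass to the limit using $\lambda_k\to 0$ and $Tx^{k-1}\to q\in Fix(T)$, then rewrite $H_0$ and $H_+$ as a hyperplane and half-space through $q$ and invoke the projection characterization (Lemma \ref{lem22}); you merely spell out details the paper treats as ``trivial'' (that $q\in C$, that the sets are closed and convex). Part (iii) is where you genuinely diverge. The paper disposes of it by citing the classical identification result — ``it has already been proved that $q=P_{Fix(T)}u$'' when $\lambda_k\to0$ and $\sum_k\lambda_k=\infty$ — and then deduces $Fix(T)\subset H_+$ and $q=P_{H_+}u$ as consequences. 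You instead prove the variational inequality $\langle u-q,v-q\rangle\le 0$ for all $v\in Fix(T)$ directly from the recursion: the standard estimate $\|x^k-v\|^2\le(1-\gamma_k)\|x^{k-1}-v\|^2+\gamma_k\langle u-v,x^k-v\rangle$ with $\gamma_k=\frac{2\lambda_k}{1+\lambda_k}\ge\lambda_k$, followed by the summation/contradiction argument using $\sum_k\gamma_k=\infty$, and only then read off $q=P_{Fix(T)}u$ from part (ii). Your route is self-contained (no appeal to the Halpern convergence literature), makes explicit exactly where divergence of $\sum_k\lambda_k$ enters, and reverses the logical direction of the paper's argument (you derive the identification of $q$ from (ii), the paper derives $Fix(T)\subset H_+$ from the identification); the paper's route is shorter at the cost of leaning on an external black box. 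Both are sound.
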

\begin{proof}
First observe that $q\in Fix(T)$. Now for each $z\in \mathcal{H}$, it follows from \eqref{Halpern-C} that
\begin{align*}
\|x^k-z\|^2&=\|\lambda_k (u-z)+(1-\lambda_k)(Tx^{k-1}-z)\|^2\\
&=\lambda_k^2\|u-z\|^2+(1-\lambda_k)^2\|Tx^{k-1}-z\|^2+2\lambda_k(1-\lambda_k)\langle u-z,Tx^{k-1}-z\rangle.
\end{align*}
Since $\lambda_k\to 0$ and $x^k\to q\in Fix(T)$, it turns out that
\begin{align*}
&\lim_{k\to\infty}\frac{\|x^k-z\|^2-\|Tx^{k-1}-z\|^2}{\lambda_k}\\
&=\lim_{k\to\infty}\{\lambda_k\|u-z\|^2-(2-\lambda_k)\|Tx^{k-1}-z\|^2+2(1-\lambda_k)\langle u-z,Tx^{k-1}-z\rangle\}\\
&=-2\|q-z\|^2+2\langle u-z,q-z\rangle=2\langle u-q,q-z\rangle.
\end{align*}
This proves (i).
As a consequence of (i), we can rewrite the sets $H_0$ and $H_{+}$ as
$$H_0=\{z\in \mathcal{H}: \langle u-q,q-z\rangle=0\}\quad {\rm and}\quad
H_{+}=\{z\in \mathcal{H}: \langle u-q,q-z\rangle\ge 0\}.$$
Hence, it is trivial that $q\in H_0\subset H_{+}$ and (\ref{eq:H01}) holds.

(iii) Under the condition $\sum_{k=1}^\infty \lambda_k=\infty$,
it has already been proved that $q=P_{Fix(T)}u$. Thus,
$\langle u-q,q-z\rangle\ge 0$ for all $z\in Fix(T)$; that is, $Fix(T)\subset C\cap H_{+}$.
Moreover, we also find that $q=P_{C\cap H_{+}}u=P_{H_{+}}u$ by definition of $H_{+}$.
\end{proof}

\section{Numerical Experiments}
\label{Sec:5}
From the previous analysis, it can be seen that the rate of asymptotic regularity of Algorithm \ref{Al:3.1} is better than the usual Halpern iteration with $\lambda_k=\frac{1}{k+1}$, especially, in the case where $\sum_{k=1}^\infty \frac{1}{\varphi_k+1}<+\infty$, the acceleration effect of Algorithm \ref{Al:3.1} is much more prominent.
In this section,  to test the  effectiveness of the proposed adaptive algorithm (Algorithm \ref{Al:3.1}),  we  compare Algorithm \ref{Al:3.1} and Halpern iteration with the parameter sequence $\{\frac{1}{k+1}\}_{k=1}^\infty$ through two numerical examples.

\begin{example}\label{eg:5.1}
\label{ex1}
\rm
Consider the fixed point problem in \cite{He2021} for the mapping $T:\mathbb{R}^3 \rightarrow  \mathbb{R}^3$ defined by
$$
T(x,y,z)=
\left(
\aligned
&\frac{-35x-\sqrt{|x|+1}-10y+14z+1}{54.5}\\
&\frac{-10x-26y-\frac{1}{2}\sin(y)+4z}{54.5}\\
&\frac{14x+4y-38z-\arctan(\frac{z}{2})}{54.5}
\endaligned
\right),
\quad  (x,y,z)^{\top} \in \mathbb{R}^3.
$$
It is easy to verify that $T$ is nonexpansive.
\end{example}

\begin{figure}[!h]
\setlength{\floatsep}{0pt} \setlength{\abovecaptionskip}{0pt}
\centering
\scalebox{0.65} {\includegraphics{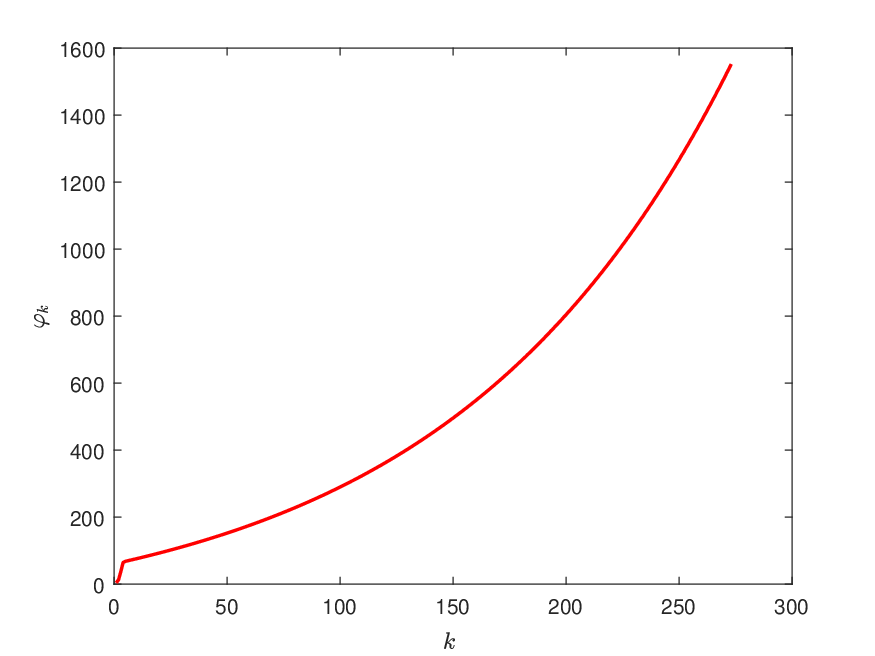}}
\caption{Increase of $\varphi_k$ with $k$ for Example 5.1.}
\label{phi1}
\end{figure}

\begin{figure}[!h]
\setlength{\floatsep}{0pt} \setlength{\abovecaptionskip}{0pt}
\centering
\scalebox{0.65} {\includegraphics{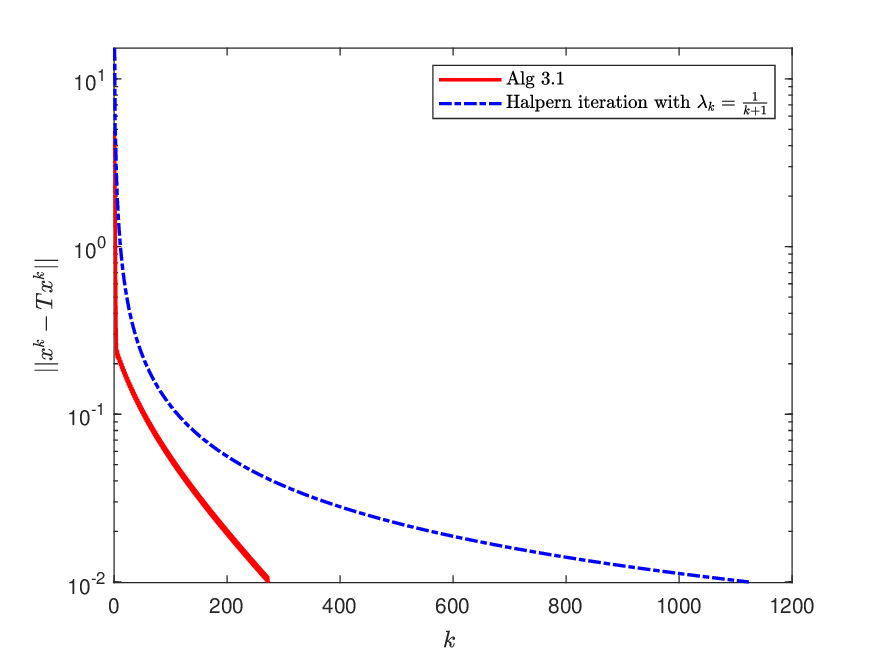}}
\caption{Comparison of Algorithm \ref{Al:3.1} and Halpern iteration with $\lambda_k=\frac1{k+1}$ for Example \ref{eg:5.1}.}
\label{fig1}
\end{figure}

Figure \ref{phi1} illustrates that  $\varphi_k=O(k^2)$,  which belongs to the case where $\sum_{k=1}^\infty \frac{1}{\varphi_k+1}<+\infty$; so it is observed from Figure \ref{fig1} that Algorithm 3.1 behaves much better than Halpern iteration with  $\lambda_k=\frac{1}{k+1}$.

\begin{example}\label{eg:5.2}
\label{ex2}
\rm
 Consider the following LASSO problem \cite{Tibshirani1996}:
\begin{equation}\label{lasso}
\min\frac{1}{2}\|Ax-b\|^{2} +\tau \|x\|_1
\end{equation}
where $A\in  \Bbb R^{m\times n}$, $m< n$, $b\in  \Bbb R^m$ and $\tau >0$. We generate the matrix $A$ from a standard normal distribution with mean zero and unit variance. The true sparse signal $x^*$ is generated from uniform distribution in the interval $[-2,2]$ with random $K$ position nonzero while the rest are kept zero. The sample data $b= Ax^*$.
\end{example}

By the first-order optimality condition of LASSO problem \eqref{lasso}, we have the following fixed point equation
$$
x={\rm prox}_{\gamma\tau\|\cdot\|_1}(x-\gamma A^\top(Ax-b)),
$$
where $\gamma>0$, ``${\rm prox}$" is a proximal operator. It is known that the proximal operator of $\ell_1$-norm is
given componentwise by
$$({\rm prox}_{\gamma\tau\|\cdot\|_1}(x))_i={\rm sign}(x_i)\max\{|x_i|-\gamma\tau,0\}$$
for $x=(x_1,x_2,\cdots,x_n)^{\top}\in\mathbb{R}^n$ and $i=1,2,\cdots,n$.
Let $\gamma\in (0,\frac{2}{\|A\|^2})$, then the mapping $T:\mathbb{R}^n\rightarrow\mathbb{R}^n$ defined by
$$Tx :={\rm prox}_{\gamma\tau\|\cdot\|_1}(x-\gamma A^\top(Ax-b))$$
is nonexpansive (see \cite{Xu2011}).
\vskip 2mm

In the numerical results listed in Table \ref{Tab1}, we consider $(m, n, K) = (120i, 512i, 20i)$ for $i = 1, 2, \ldots, 10$. We run 10 instances randomly for each $(m, n, K)$ and report the number of iterations (Iter), CPU time in seconds (CPU time) and the relative error (Err) defined as
$$
{\rm Err} := \frac{\|\hat{x} - x^*\|}{\|x^*\|},
$$
with $\hat x$ being the recovered sparse solution by algorithms.
We terminate the algorithms in the experiment when
$$
{\|x^k - Tx^{k}\|} < 10^{-4}.
$$

As we can see from Figure \ref{phi2}, the parameter $\varphi_k$ exponentially increases as $k$ increases,  similar to Example \ref{eg:5.1} which also belongs to the case of $\sum_{k=1}^\infty \frac{1}{\varphi_k+1}<+\infty$. Thus from Table \ref{Tab1} and Figure \ref{fig2}, it is observed that Algorithm \ref{Al:3.1} performs much better than Halpern iteration with $\lambda_k=\frac1{k+1}$ in terms of Iter, CPU time and Err.

From the above two examples, we find that the case of $\sum_{k=1}^\infty \frac{1}{\varphi_k+1}<+\infty$ does not just occur in some simple examples like Example \ref{eg:5.1}, but also in practical problems such as the LASSO problem. This phenomenon shows that Algorithm 3.1 has practical computational value.

\begin{table}[h]
\label{table}
\small
\centering
\caption{Computational results of Algorithm 3.1 and Halpern iteration with $\lambda_k=\frac1{k+1}$ for LASSO problem}
\vskip 2mm
\begin{tabular}{cccccccccccccccccc}
\hline
\multicolumn{3}{c}{Problem size}  & \multicolumn{4}{c}{Algorithm 3.1} &   \multicolumn{4}{c}{Halpern iteration with $\lambda_k=\frac1{k+1}$}\\
\cline{1-3} \cline{5-7} \cline{9-11}
$m$ & $n$ & $K$ && Iter & CPU time & Err && Iter & CPU time & Err \\
\hline
 \hline
120&512&20&&4245&1.6219&0.0599&&48256&17.8234&0.0617\\
240&1024&40&&10246&9.0219&0.0459&&73516&61.3859&0.0480\\
360&1536&60&&14243&28.6266&0.0367&&88838&179.0906&0.0384\\
480&2048&80&&17367&57.8031&0.0289&&101470&335.3656&0.0306\\
600&2560&100&&21540&109.1563&0.0268&&113000&560.2719&0.0285\\
720&3072&120&&26315&176.2688&0.0262&&126720&830.5828&0.0280\\
840&3584&140&&32154&269.7188&0.0245&&138460&1179.5&0.0266\\
960&4096&160&&34949&367.6172&0.0219&&147290&1545.2&0.0238\\
1080&4608&180&&38025&503.0031&0.0206&&152360&2035.3&0.0225\\
1200&5120&200&&43812&734.1328&0.0197&&161050&2660.8&0.0216\\
\hline
\hline
\end{tabular}\label{Tab1}
\end{table}

\begin{figure}[!h]
\setlength{\floatsep}{0pt} \setlength{\abovecaptionskip}{0pt}
\centering
\scalebox{0.65} {\includegraphics{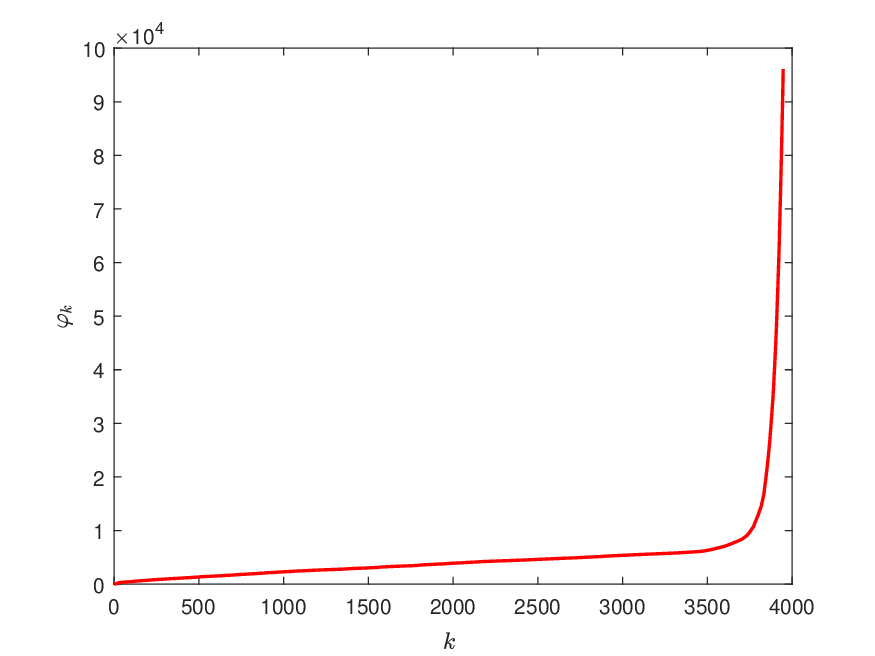}}
\caption{Increase of $\varphi_k$ with $k$ for LASSO problem}
\label{phi2}
\end{figure}

\begin{figure}[!h]
\setlength{\floatsep}{0pt} \setlength{\abovecaptionskip}{0pt}
\centering
\scalebox{0.65} {\includegraphics{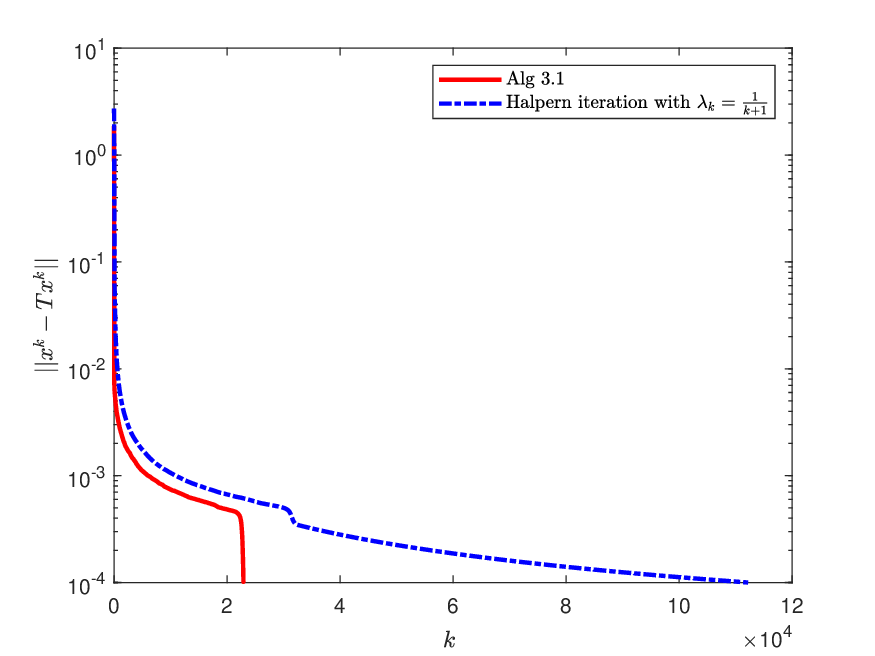}}
\caption{Comparison of Algorithm \ref{Al:3.1} and Halpern iteration with $\lambda_k=\frac1{k+1}$ for Lasso problem with $m=600$, $n=2560$ and $K=100$ for the cameraman.}
\label{fig2}
\end{figure}

\begin{example}\label{eg:5.3}
\label{ex3}
\rm
Consider the cameraman test image \cite{A2009}:
\begin{equation}\label{eg5.3}
\min F(x)\equiv\|Ax-b\|^{2} +\tau \|x\|_1
\end{equation}
 where $b$ represents the
(vectorized) observed image, and $A = RW$, where $R$ is the matrix representing the blur operator and $W$ is the inverse of a three stage Haar wavelet transform. The regularization parameter is chosen to be $\tau = 2e-5$, and the initial image is the blurred image.

All pixels of the original images described in the examples are first scaled into the range between $0$ and $1$. In the example we look at the $256\times256$ cameraman test image. The image goes through a Gaussian blur of size $9\times9$ and standard deviation $4$ followed by an additive zero-mean white Gaussian noise. The original and observed images are given in Figure \ref{C2}.
\end{example}

For these experiments we assume reflexive (Neumann) boundary conditions \cite{H2006}. We then test Algorithm \ref{Al:3.1} and Halpern iteration with $\lambda_k=\frac1{k+1}$ for solving problem \eqref{eg5.3}. As we can see from Figure \ref{phi3}, $\varphi_k=O(k^2)$, similar to Example \ref{eg:5.1}, which also belongs to the case of $\sum_{k=1}^\infty \frac{1}{\varphi_k+1}<+\infty$. Iterations 500 and 1000 are described in Figure \ref{C1}. The function value at iteration $k$ is denoted by $F_k$.  The function value of Algorithm \ref{Al:3.1} is consistently lower than the function values of Halpern iteration. Note that the function value of Halpern iteration with $\lambda_k=\frac1{k+1}$ after 1000 iterations, is still worse (that is, larger) than the function value of Algorithm \ref{Al:3.1} after 500 iterations.

\begin{figure}[!h]
\setlength{\floatsep}{0pt} \setlength{\abovecaptionskip}{0pt}
\centering
\scalebox{0.85} {\includegraphics{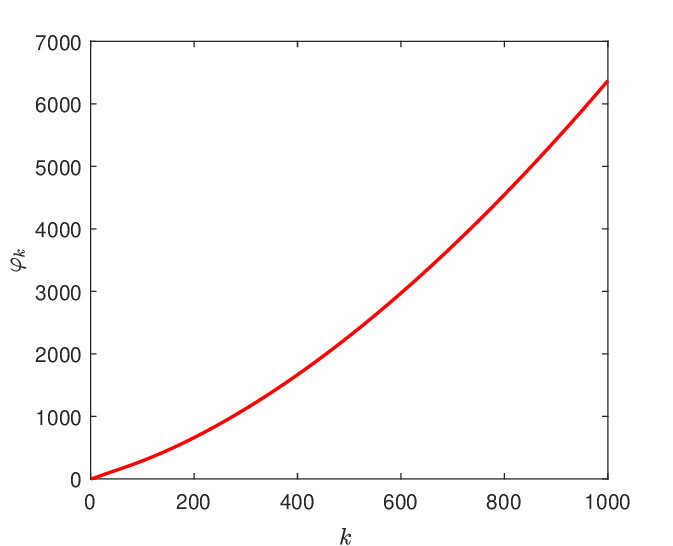}}
\caption{Increase of $\varphi_k$ with $k$ for the cameraman.}
\label{phi3}
\end{figure}

\begin{figure}[!h]
\setlength{\floatsep}{0pt} \setlength{\abovecaptionskip}{0pt}
\centering
\scalebox{0.65} {\includegraphics{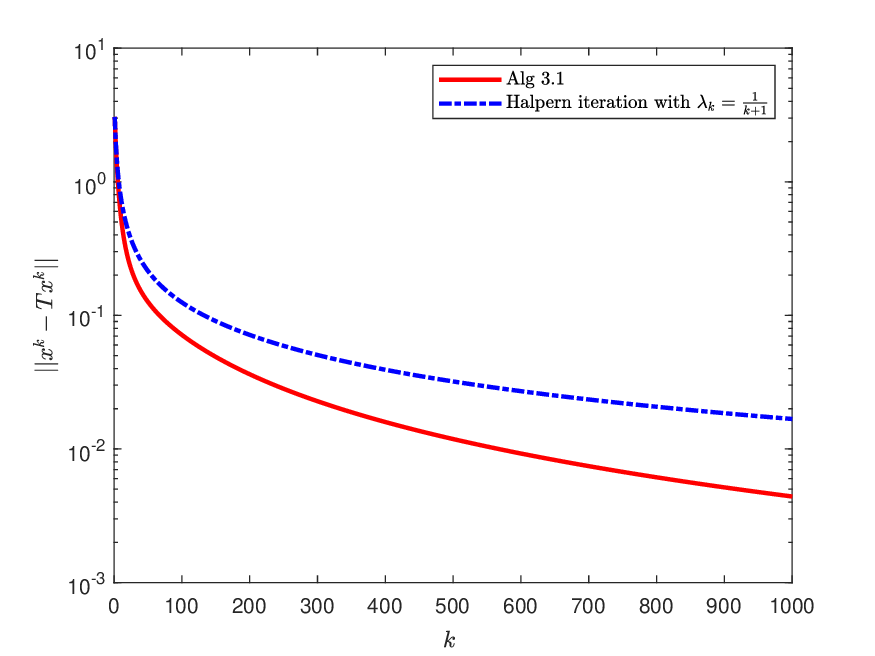}}
\caption{Comparison of Algorithm 3.1 and Halpern iteration with $\lambda_k=\frac1{k+1}$ for the cameraman.}
\label{fig1}
\end{figure}

\begin{figure}[htbp]
\centering
 \begin{minipage}{0.49\linewidth}
  \centering
  \includegraphics[width=1.0\linewidth]{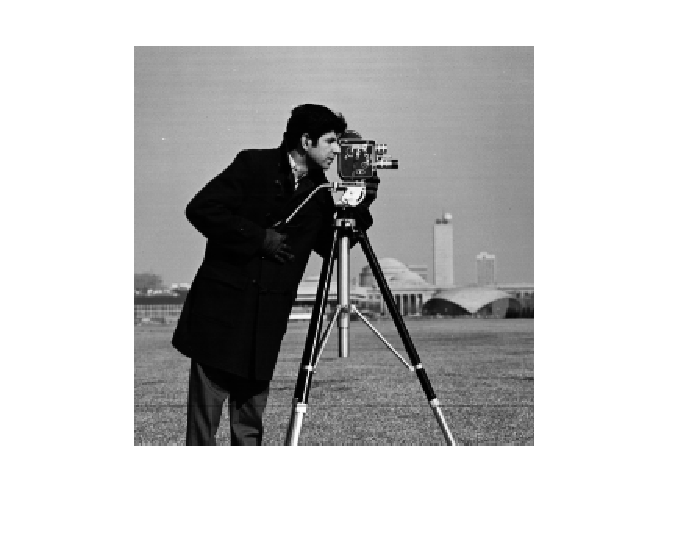}
  \caption*{original}
 \end{minipage}
 \begin{minipage}{0.49\linewidth}
  \centering
  \includegraphics[width=1.0\linewidth]{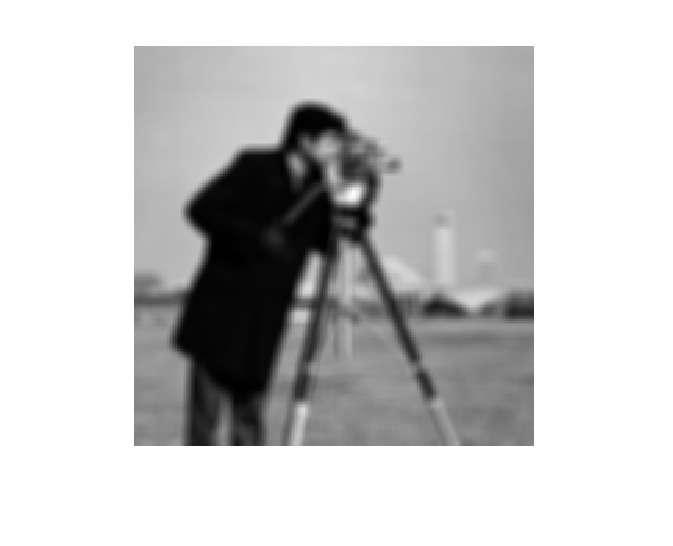}
  \caption*{blurred and noisy}
 \end{minipage}
\caption{Deblurring of the cameraman.}
\label{C2}

\vskip 3mm
 \centering
 \begin{minipage}{0.49\linewidth}
  \centering
  \includegraphics[width=1.0\linewidth]{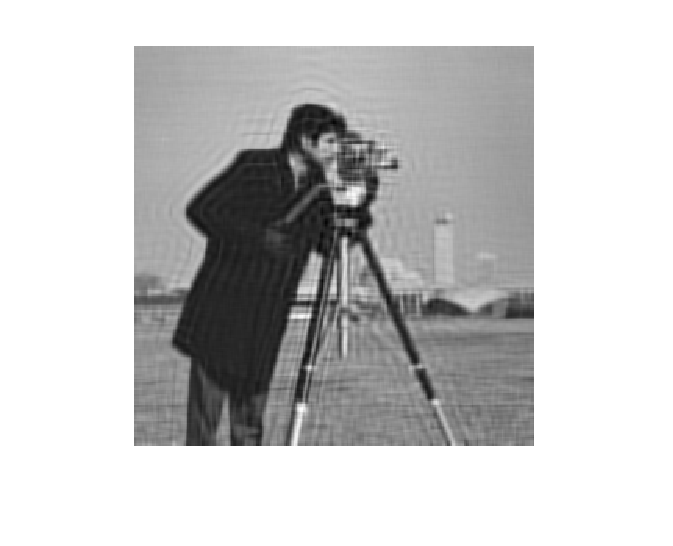}
  \caption*{Alg 3.1: $F_{500}=0.35245$}
 \end{minipage}
 \begin{minipage}{0.49\linewidth}
  \centering
  \includegraphics[width=1.0\linewidth]{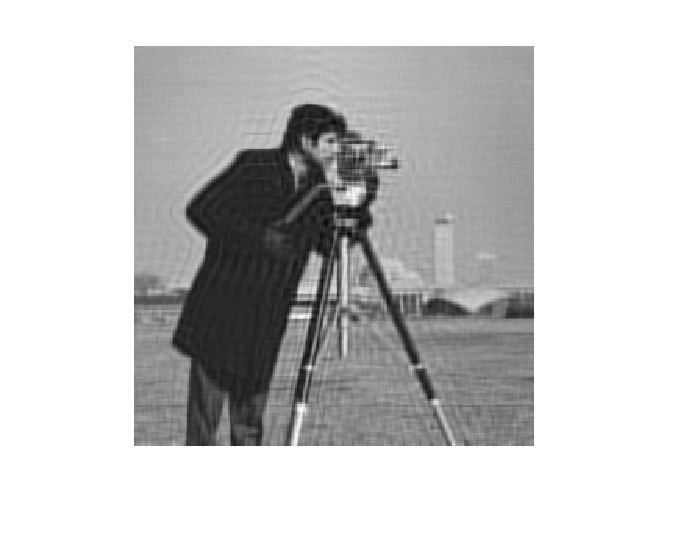}
  \caption*{Alg 3.1: $F_{1000}=0.32412$}
 \end{minipage}

 \centering
 \begin{minipage}{0.49\linewidth}
  \centering
  \includegraphics[width=1.0\linewidth]{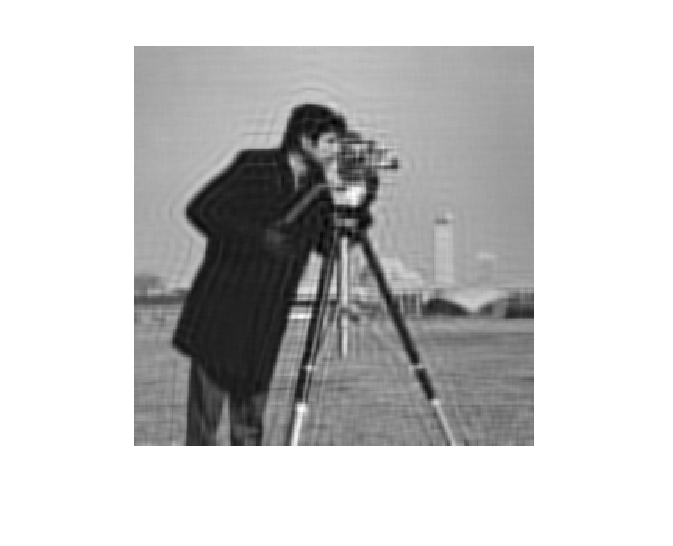}
  \caption*{Halpern iteration: $F_{500}=0.42485$}
 \end{minipage}
 \begin{minipage}{0.49\linewidth}
  \centering
  \includegraphics[width=1.0\linewidth]{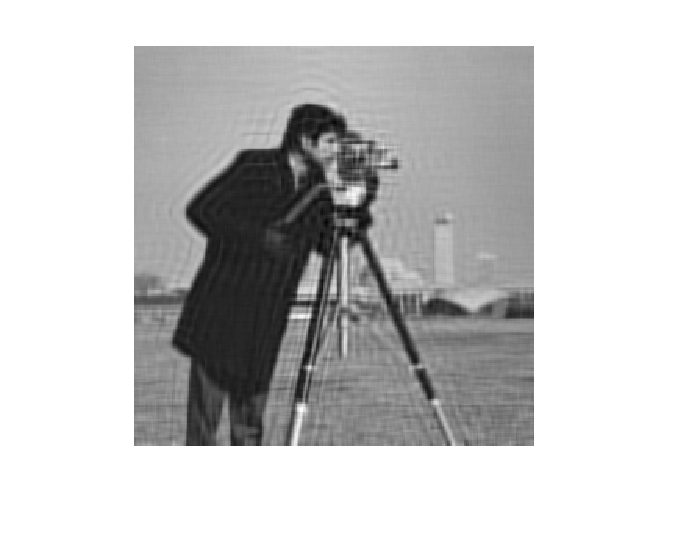}
  \caption*{Halpern iteration: $F_{1000}=0.35463$}
 \end{minipage}
 \caption{Iterations of Algorithm 3.1 and Halpern iteration with $\lambda_k=\frac1{k+1}$ for deblurring of the cameraman.}
 \label{C1}
 \end{figure}

\section{Conclusion}

We have studied the Halpern iteration in the case where the anchoring parameters are chosen in an adaptive manner
to improve the case where the anchoring parameters are chosen in an open loop way.
We have proved strong convergence of this method and obtained the rate of asymptotic regularity at least $O(1/k)$.
Our numerical experiments have shown that our adaptive Halpern iteration  outperforms the ordinary
Halpern iteration.

\vskip 4mm

\section*{Acknowledgements}
We were deeply grateful to the two anonymous referees for their constructive suggestions and critical comments on the manuscript, which
helped us significantly improve the presentation of this paper.
\vskip 3mm

\noindent{\bf Funding}\\
This work was supported by the Open Fund of Tianjin Key Lab for Advanced Signal
Processing (2022ASP-TJ01).
 Xu was supported in part by National Natural Science Foundation of China (grant number U1811461) and by Australian Research Council
 (grant number DP200100124). Dong was supported in part by National Natural Science Foundation of China (Grant No. 1227127).

\vskip 2mm

\noindent{\bf Competing interests}\\
 The authors declare that they have no competing interests.
\vskip 2.0mm

\end{document}